\renewcommand*\subjclass[2][2000]{%
  \def\@subjclass{#2}%
  \@ifundefined{subjclassname@#1}{%
    \ClassWarning{\@classname}{Unknown edition (#1) of Mathematics
      Subject Classification; using '1991'.}%
  }{%
    \@xp\let\@xp\subjclassname\csname subjclassname@#1\endcsname
  }%
}
\newtheorem{theorem}{Theorem}[section]
\newtheorem{lemma}[theorem]{Lemma}
\newtheorem*{lemma*}{Lemma}
\newtheorem{corollary}[theorem]{Corollary}
\def\1ton{1,2,\ldots,n}
\def\det{{\rm det}}
\def\Div{{\rm Div}}
\def\d{\textnormal d}
\newcommand{\norm}{\,|\!|\,}
\newcommand{\bydef}{\stackrel{{\rm def}}{=\!\!=}}
\newcommand{\onto}{\xrightarrow[]{{}_{\!\!\textnormal{onto}\!\!}}}
\newcommand{\loc}{\text{loc}}
\newcommand{\Tr}{\text{Tr}}
\newcommand{\R}{\mathbb{R}}
\newcommand{\X}{\mathbb{X}}
\newcommand{\W}{\mathscr{W}}
\theoremstyle{definition}
\newtheorem{conjecture}[theorem]{Conjecture}
\theoremstyle{remark}
\newtheorem{remark}[theorem]{Remark}
\numberwithin{equation}{section}
\newcommand{\abs}[1]{\lvert#1\rvert}
\DeclareMathOperator{\Mod}{Mod}
\def\XXint#1#2#3{{\setbox0=\hbox{$#1{#2#3}{\int}$}
\vcenter{\hbox{$#2#3$}}\kern-.5\wd0}}
\def\ge{\geqslant}
\begin{document}

\title[$(n,\rho)-$harmonic mappings and energy minimal deformations ]{$(n,\rho)-$harmonic mappings and energy minimal deformations between annuli} \subjclass{Primary 31A05;
Secondary 42B30 }


\keywords{Nitsche phenomena, $n-$harmonic mappings, Annuli}
\author{David Kalaj}
\address{University of Montenegro, Faculty of Natural Sciences and
Mathematics, Cetinjski put b.b. 81000 Podgorica, Montenegro}
\email{davidk@ac.me}

\begin{abstract}
We extend the main results obtained by Iwaniec and Onninen in Memoirs of the AMS (2012). In the paper it is solved the minimization problem of $(\rho,n)$ energy of Sobolev homeomorphisms between two concentric annuli in the Euclidean space $\mathbf{R}^n$. Here $\rho$ is a radial metric defined in the image annulus. The key of the proofs comes from the solution to the Euler-Lagrange equation for radial harmonic mapping. This is a new contribution  on the topic of famous Nitsche conjecture.
\end{abstract}  \maketitle


\section{Introduction}
Let $0<r<R$, $0<r_\ast<R_\ast$ and let  $\mathbb{A}=A(r,R)\bydef\{x:r<|x|<R\}$ and $\mathbb{A}_*=A(r_*, R_*)\bydef\{x:r_\ast<|x|<R_\ast\}$ be two annuli in the Euclidean space $\mathbb{R}^n$ equipped with the Euclidean norm $|\cdot|$. Let $\rho$  be a continuous function on the closure of $\mathbb{A}_*$.
The $(\rho,n)-$ energy integral of a mapping $h\in \W^{1,n}(\mathbb{A}, \mathbb{A}_*)$  is defined by \begin{equation}\label{penergy}\mathscr{E}_\rho[h]=\int_{A(r,R)} \rho(h(x)) \|Dh(x)\|^n dx.\end{equation} The central aim of this paper is to minimize the $(\rho,n)-$ energy integral between $\mathbb{A}$ and $\mathbb{A}_\ast$ throughout the class of homomorphisms from the Sobolev class $\W^{1,n}(\mathbb{A}, \mathbb{A}_*)$.  We will assume that $\rho$ is $C^1$ radial metric that is $\rho(y)=\rho(|y|)$ for $y\in \mathbb{A}_*$. We will also assume that
\begin{equation}\label{regularm}\min_{r_*\le s\le R_*} \rho(s)s^n=\rho(r_*)r_*^n\end{equation} and refer to those metrics as \emph{regular metrics}.  The modulus of $A(r,R)$ is defined by the formula $\Mod A(r,R)=\omega_{n-1}\log \frac{R}{r}$, where $\omega_{n-1}$ is the area of the unit sphere $\mathbf{S}^{n-1}$.

For a homomorphism $f$ of Sobolev class class $\W^{1,1}$ we say that has finite outer distortion if
$$\|D f\|^n\le n^{n/2}K_O[f,x]J_f(x),$$ where $1\le K_O[f,x]$ is measurable and the least function with the above property. Then $K_O[f,x]$ is called the outer distortion of $f$. Here $$\|D f\|=\sqrt{\left<Df, Df\right>}=\sqrt{\sum_{k=1}^n |Df(x)e_i|^2},$$ and $J_f$ is the determinant of the Jacobian matrix. A concept somehow dual is the inner distortion defined by the so-called co-factor matrix of $Df(x)$. Namely we define $\mathbf{adj}(Df(x))=J_f ( D^*f(x))^{-1}.$
Then $$\mathbb{K}_I[f,x] = \frac{\|\mathbf{adj}(Df(x))\|^{n}}{n^{n/2}\det(\mathbf{adj}(Df(x)))}=\frac{\|\mathbf{adj}(Df(x))\|^{n}}{n^{n/2}J_f^{n-1}(x)},$$ for $J_f(x)\neq 0$ and $K_I[f,x]=1$ for $J_f(x)=0$.

An important fact to be noticed in this introduction is the fact, if $\rho$ is a continuous function on the closure of $\mathbb{A}_*$ and  if $f\in \W^{1,n-1}(\mathbb{A}_*, \mathbb{A})$ is a homeomorphism, then its inverse mapping $h$ belongs to the Sobolev class $\W^{1,n}(\mathbb{A}, \mathbb{A}_*)$  and we have the following formula

\begin{equation}\label{disener}\int_{\mathbb{A}}\rho(h(x))\|Dh(x)\|^n dx =n^{n/2}\int_{\mathbb{A}_*}\rho(y)\mathbb{K}_I[f,y] dy .\end{equation}
Concerning the criteria of integrability of inverse mapping and related problems we refer to the papers \cite{mali, heko}.

In this paper we extend the main result and simplify the proofs in \cite{memoirs}. We made a unified approach to the minimizing problem of $(\rho,n)$-energy for the class of all $C^1$  radial metrics $\rho$ satisfying the condition $\rho(s)s^n$ is non-decreasing. This condition has been fulfilled by two metrics $\rho(s)\equiv 1$ and $\rho(s)\equiv s^{-n}$ considered by Iwaniec and Onninen in \cite{memoirs}. The paper generalizes  also the main results by Astala, Iwaniec and Martin in \cite{astala}, and also by the author in \cite{klondon} where it is treated the similar problem but only for the case $n=2$. The case of non rounded annuli and non radial metrics has been treated in the papers \cite{invent} and \cite{calculus} respectively, but also for the case $n=2$.  In this paper we assume that $n\ge 3$. The paper  is a continuation of study of the so-called Nitsche phenomenon, invented by J. C. C. Nitsche in \cite{Nitsche} who stated his famous conjecture. Further the conjecture has been proved by Iwaniec, Kovalev and Onninen in \cite{nconj}, after some partial results obtained by Weitsman \cite{W}, Lyzzaik \cite{L} and Kalaj \cite{Ka}. For its counterpart to general annuli on Riemannian surfaces we refer to the recent paper \cite{Ka1}. The Nitsche conjecture in the context of $(\rho,n)$-harmonic mappings is given in \eqref{nitb}. We prove in the first result (Theorem~\ref{newkalajrad}) that we can find a radial $(\rho,n)$-harmonic harmonic mapping between two annuli $\mathbb{A}$ and $\mathbb{A}_\ast$ if and only if the generalized Nitsche bound \eqref{nitb},  is satisfied. This bound said roughly speaking that if we have a $(\rho,n)$ harmonic diffeomorphism between annuli $\mathbb{A}$ and $\mathbb{A}_\ast$, then the image annulus cannot bee too thin, but can be arbitrary thick. On the other hand this bound is equivalent with the fact that the $(\rho,n)-$ energy integral is minimized for a certain radial $(\rho,n)-$harmonic diffeomorphism if $n=3$; if $n\ge 4$, then we have some obstruction, and in this case the image annulus cannot be too thick (Theorem~\ref{newkalaj}), in order that the radial mapping is a minimizer. The precise estimate how thick the image annulus could be, remains an open interesting problem.

\subsection{$(\rho,n)$-harmonic equation}
The classical Dirichlet problem concerns the energy minimal mapping $h \colon \mathbb{A} \to \mathbb{R}^n$ of the Sobolev class $h\in h_\circ + \W^{1,n}_\circ (\mathbb{A}, \mathbb{R}^n)$ whose boundary values are explicitly prescribed by means of a given mapping $h_\circ \in  \W^{1,n} (\mathbb{A}, \mathbb{R}^n)$.  Let us consider the variation   $h \leadsto h\,+ \,\epsilon \eta $,  in which $\eta \in \mathscr C^\infty_\circ (\X , \R^n)$ and $\epsilon \to 0$, leads to the integral form of the familiar $n$-harmonic system of equations
\begin{equation}\label{equa1}
\int_{\mathbb{A}} \langle \rho(h) \norm Dh\norm^{n-2}Dh , \, D\eta \rangle =0, \quad \mbox{ for every } \eta \in \mathscr C^\infty_\circ (\X , \R^n).
\end{equation}
Equivalently
\begin{equation}\label{equa2}
\Delta_n h = \Div (\big( \rho(h)\norm Dh \norm^{n-2}Dh\big)=0, \quad \mbox{in the  sense of distributions.}
\end{equation}

Similarly as in in \cite{memoirs}, it can be derived the general $(\rho,n)$-harmonic equation which by using a different variation as the following.

The situation is different if  we allow $h$ to slip freely along the boundaries. The {\it inner variation} come to stage in this case. This is simply a change of the  variable; $h_\epsilon=h \circ \eta_\epsilon $, where  $\eta_\epsilon \colon \X \onto \X$ is a $\mathscr C^\infty$-smooth diffeomorphsm  of $\X$ onto itself, depending smoothly on a parameter $\epsilon \approx 0$ where  $\eta_\circ = id \colon \X \onto \X$. Let us take on the inner variation of the form
\begin{equation}\label{equa7}
\eta_\epsilon (x)= x + \epsilon \, \eta (x), \qquad \eta \in \mathscr C_\circ^\infty (\X, \R^n).
\end{equation}
By using the notation  $y=x+\epsilon \, \eta (x) \in \X$,
we obtain
$$\rho(h_\epsilon)Dh_\epsilon (x) = \rho(h(y)) Dh (y) (I+ \epsilon D\eta).$$ Hence
\[
\begin{split}
\rho(h(y))\norm Dh (y)\norm^n & = \rho(h(y))\norm Dh(y)\norm^n
\\&+ n \epsilon\,  \rho(h(y))\langle \norm{Dh}\norm ^{n-2} D^\ast h(y)\cdot  Dh(y)\, ,\,  D \eta \rangle + o(\epsilon).
\end{split}
\]
Integration with respect to $x\in \X$ we obtain
\[\mathscr E_\rho[h_\epsilon] = \int_\X \left[ \rho(h_\epsilon)\norm Dh \norm^n +  n \epsilon \rho(h_\epsilon)\langle \norm{Dh}\norm ^{n-2} D^\ast h\cdot  Dh\, ,\,  D \eta \rangle \right]\, \d x + o(\epsilon) .\]
We now make the substitution $y=x + \epsilon \, \eta (x)$, which is a diffeomorphism for small $\epsilon$, for which we have:  $x= y- \epsilon \, \eta (y)+ o(\epsilon)$, $\eta (x)= \eta (y)+o(1)$, $\rho(h(x))=\rho(\rho(y))+o(1)$ and the change of volume element $\d x = [1-\epsilon \, \Tr \,D \eta (y) ]\, \d y + o(\epsilon) $. Further
$$\int_\X \rho(h(y))\norm Dh(y) \norm^n \d x=\int_\X \rho(h(y))\norm Dh(y) \norm^n [1-\epsilon \, \Tr \,D \eta (y) ]\, \d y + o(\epsilon)$$
 The so called equilibrium equation for the inner variation is obtained from $\frac{\d}{\d \epsilon} \mathscr E_{h_\epsilon}\,=\,0\,$ at $\epsilon =0$,
\begin{equation}\label{intstar}\int_\X \langle \rho(h)\norm Dh \norm^{n-2} D^\ast h \cdot Dh - \frac{\rho(h)}{n} \norm Dh \norm^n I \, , \, D \eta \rangle \, \d y=0 \end{equation}
or, by using distributions
\begin{equation}\label{enhe}
\Div \left(\rho(h)\norm Dh \norm^{n-2} D^\ast h \cdot Dh - \frac{\rho(h)}{n} \norm Dh \norm^n I  \right)=0.
\end{equation}

The name {\it generalized $n$-harmonic equation} is given to ~\eqref{enhe} because of the following:
\begin{lemma}\label{leiwa}
Every $n$-harmonic mapping $h\in \W^{1,n}_{\loc} (\X , \R^n)$ solves the generalized $n$-harmonic equation~\eqref{enhe}.
\end{lemma}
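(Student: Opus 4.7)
The plan is to verify \eqref{enhe} by a direct computation of the divergence of the stress--energy tensor
$B_{ij}:=\rho(h)\|Dh\|^{n-2}\sum_k\partial_i h^k\,\partial_j h^k-\tfrac{\rho(h)}{n}\|Dh\|^n\delta_{ij}$,
reducing everything to the scalar Euler--Lagrange equation for the components $h^k$. The conceptual reason this must succeed is that the inner variation $h\mapsto h\circ(x+\epsilon\eta)$ coincides, at first order, with the outer variation $h\mapsto h+\epsilon (Dh)\eta$, so \eqref{enhe} is essentially \eqref{equa1} tested in the direction $\tilde\eta=(Dh)\eta$.

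Assuming first that $h$ is smooth, I would expand $\sum_j\partial_j B_{ij}$ by the product rule. Three groups arise naturally: (a)~$\sum_k\bigl[\sum_j\partial_j(\rho(h)\|Dh\|^{n-2}\partial_j h^k)\bigr]\partial_i h^k$; (b)~$\rho(h)\|Dh\|^{n-2}\sum_{j,k}\partial_j h^k\,\partial_j\partial_i h^k$; and (c)~$-\tfrac1n\partial_i\bigl(\rho(h)\|Dh\|^n\bigr)$. The algebraic identity $\sum_{j,k}\partial_j h^k\,\partial_j\partial_i h^k=\tfrac12\partial_i\|Dh\|^2$ turns (b) into $\tfrac{\rho(h)}{n}\partial_i\|Dh\|^n$, which cancels the matching $\rho(h)\partial_i\|Dh\|^n$-piece inside (c); what remains is group~(a) minus $\tfrac1n\|Dh\|^n\,\partial_i\rho(h)$.

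To finish, I invoke \eqref{equa2} componentwise. Read as the $k$-th component of the full Euler--Lagrange equation of $\mathscr{E}_\rho$, it says $\sum_j\partial_j(\rho(h)\|Dh\|^{n-2}\partial_j h^k)=\tfrac1n\rho_{y_k}(h)\|Dh\|^n$; multiplying by $\partial_i h^k$, summing in $k$, and applying the chain rule $\sum_k\rho_{y_k}(h)\partial_i h^k=\partial_i\rho(h)$ converts group~(a) into exactly $\tfrac1n\|Dh\|^n\partial_i\rho(h)$, which cancels the leftover term. Hence $\sum_j\partial_j B_{ij}=0$ pointwise for smooth $h$, which is \eqref{enhe}.

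The main obstacle is regularity: the pointwise argument uses $D^2h$ whereas $h$ is only assumed to lie in $\W^{1,n}_{\loc}$. I would bypass this by working entirely in the weak formulation, inserting the test field $\tilde\eta=(Dh)\eta$ with $\eta\in\mathscr{C}^\infty_\circ(\X,\R^n)$ into \eqref{equa1}; admissibility extends by density since $\rho(h)\|Dh\|^{n-2}Dh\in L^{n/(n-1)}_{\loc}$ pairs against $D\tilde\eta\in L^n_{\loc}$. A Leibniz expansion of $D\tilde\eta$ together with an integration by parts of the resulting factor $\partial_\alpha\|Dh\|^n$ reproduces \eqref{intstar} without any reference to second derivatives, from which \eqref{enhe} follows distributionally. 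An equivalent justification proceeds by mollifying $h$, applying the smooth identity above, and passing to the limit via dominated convergence in $L^{n/(n-1)}_{\loc}\times L^n_{\loc}$.
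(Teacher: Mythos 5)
Your smooth-case computation is correct, and you make the right call in reading ``$n$-harmonic'' as the full Euler--Lagrange system of $\mathscr{E}_\rho$, namely $n\sum_j\partial_j\bigl(\rho(h)\|Dh\|^{n-2}\partial_j h^k\bigr)=\rho_{y_k}(h)\|Dh\|^n$: this extra right-hand side (absent from \eqref{equa1}--\eqref{equa2} as printed, which ignore the variation of $\rho(h)$) is exactly what is needed to cancel the leftover $\tfrac1n\|Dh\|^n\partial_i\rho(h)$ in your expansion, and without it the stress tensor in \eqref{enhe} is simply not divergence-free for non-constant $\rho$. Note that this is a genuinely different route from the paper, whose ``proof'' of the lemma is a single sentence deferring to the corresponding statement in Iwaniec--Onninen and asserting that $\rho$ changes nothing essential; your pointwise identity is the computation that citation conceals, with the $\nabla\rho(h)$ bookkeeping made explicit, and as such it is a useful addition.

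The genuine gap is the reduction from $\W^{1,n}_{\loc}$ to the smooth computation. Testing \eqref{equa1} with $\tilde\eta=(Dh)\eta$ is not admissible ``by density'': $\tilde\eta$ need not lie in $\W^{1,n}_{\loc}$ at all, since $D\tilde\eta$ contains $D^2h$, and the distributional gradient of $\|Dh\|^n\in L^1_{\loc}$ need not be a function, so the proposed Leibniz expansion followed by integration by parts on $\partial_\alpha\|Dh\|^n$ presupposes precisely the second-order regularity you are trying to avoid. The mollification variant fails for the same underlying reason: $h_\delta$ does not solve the Euler--Lagrange system (mollification does not commute with the nonlinear divergence), so ``applying the smooth identity'' to $h_\delta$ generates commutator errors, and controlling them --- not dominated convergence --- is the whole difficulty; for general quasilinear systems, weak solutions of the outer equation need not satisfy the inner-variational equation. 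What actually closes the gap, and what the citation to Iwaniec--Onninen is implicitly leaning on, is the regularity theory for Uhlenbeck-type systems (the coefficient depends on $Dh$ only through $\|Dh\|$): solutions are $\mathscr{C}^{1,\alpha}_{\loc}$, smooth off the zero set of $Dh$, and enjoy second-order estimates of the type $\|Dh\|^{(n-2)/2}Dh\in\W^{1,2}_{\loc}$, which is exactly the quantitative information that legitimizes your identity almost everywhere (or the test-function argument after truncation). So either invoke that regularity explicitly (and check it survives the $C^1$ weight $\rho(h)$), or restrict the lemma to the $C^2$ radial solutions that are the only ones used later in the paper, where your computation applies verbatim.
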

\begin{proof}
It is the similar as in \cite{memoirs} because $\rho$ make no essential changes for the proof.
\end{proof}
In dimension $n=2$, the generalized harmonic equation reduces to
\begin{equation}\label{e2he}
\Div \left(\rho(h)D^\ast h \, Dh - \frac{\rho(h)}{2} \norm Dh \norm^2 I  \right)=0.
\end{equation}
This equation is known as Hopf equation, and the corresponding differential is called the Hopf differential. Since for $h(z)=(a(z),b(z))$, we have $$\rho(h)D^\ast h \, Dh - \frac{\rho(h)}{2} \norm Dh \norm^2 I  =\left(
                                      \begin{array}{cc}
                                        U & V \\
                                        V & -U \\
                                      \end{array}\right),$$ where $$U = \frac{\rho(h)}{2} (a_x^2+b_x^2-a_y^2-b_y^2)$$ and $$V=\rho(h)(a_xa_y+b_xb_y),$$                                 then \eqref{e2he} in complex notation  takes the form $$(U_x +U_y)- i (V_x+V_y)=0$$ or what is the same
\begin{equation}\label{eqv}
\frac{\partial}{\partial \bar z} \left(\rho(h(z))h_z \overline{h_{\bar z}}\right)=0, \qquad z= x+iy.
\end{equation}
In \cite{invent} and \cite{calculus}, it is used the fact that Hopf's differential of a minimizer has special form namely  $$\rho(h(z))h_z \overline{h_{\bar z}}=\frac{c}{z^2}$$ for a certain constant $c$ that  depends on the ration of modulus of annuli. In this paper, this constant $c$ will be also crucial for proving the minimization result.

If, in addition $h\in \mathscr C^2$ then~\eqref{eqv} is equivalent with
\begin{equation}\label{eqvi}h_{z\overline z}+{(\log \rho)}_w\circ h\cdot  h_z\,h_{\bar z}=0,
\end{equation}
which is known as the harmonic mapping equation. In particular, if $\rho(w)=(1-|w|^2)^{-2}$, then the equation produces hyperbolic harmonic mappings. The class is specially interesting, due to recent discover that every quasisimmetric map of the unit circle onto itself can be extended to a quasiconformal hyperbolic harmonic mapping of the unit disk onto itself. This problem  is known as the Schoen conjecture and it was solved recently in positive by Markovi\'c in \cite{markovic}.
\section{Radial solutions to the generalized $n$-harmonic equation}
We assume that $R>1,$ and $R_\ast>1$ and $\mathbb{A}=A(1,R)$, $\mathbb{A}_\ast=A(1,R_\ast)$.
Recall that $\rho$ is a radial $C^1$ function in $\mathbb{A}_\ast=A(1,R_\ast)$ so that $\rho(s)s^n$ attains its minimum for $s=1$.
Let us consider a radial mapping
\[h(x)= H\big(\abs{x}\big) \frac{x}{\abs{x}}, \qquad \mbox{where $H=H(t)$ is $C^2$}\]
We find that
\begin{equation}\label{eqvii}
\begin{split}
\Lambda & = \rho(h)\norm Dh \norm^{n-2} \left(D^\ast h \cdot Dh - \frac{1}{n} \norm Dh \norm^2 I\right)\\
& = \rho(h) (n-1)^\frac{n-2}{n} \left(H^2 + \frac{\abs{x}^2 \dot{H}^2}{n-1}  \right)^\frac{n-2}{2} \left(H^2 - \abs{x}^2 \dot{H}^2\right) \frac{1}{\abs{x}^n} \left(\frac{x \otimes x}{\abs{x}^2}- \frac{1}{n}I\right).
\end{split}
\end{equation}
Thus \eqref{e2he} reduces to \begin{equation}\label{redu}
\Div \Lambda\equiv 0.
\end{equation}
We show  that if $h$ is a $\mathscr C^2$-smooth $n$-harmonic mapping then $H=H(t)$ must satisfy the  {\it characteristic equation}
\begin{equation}\label{eqviii}
 \rho(h)\left(H^2 + \frac{\abs{x}^2 \dot{H}^2}{n-1}  \right)^\frac{n-2}{2}\cdot\left(H^2 - |x|^2 \dot{H}^2\right)\equiv \mbox{const.}
\end{equation}

Assume that $h=H(t) \frac{x}{|x|}$, $t=|x|$, is a radial function, where $H$ is real diffeomorphism between intervals $[1,R]$ and $[1,R_*]$, then by a direct calculation we obtain $$\|Dh\|^2 = \dot H^2(t)+(n-1) \frac{H^2(t)}{t^2}, $$ and $$J_h=\frac{\dot HH^{n-1}}{t^{n-1}}.$$

Thus

$$\mathscr{E}_\rho[h]=\mathcal{E}[H]\bydef\omega_{n-1} \int_1^R\rho(H(t)) t^{n-1} \dot H^2(t)+(n-1) H^2(t)/t^2)^{n/2} dt.$$
If $$ L[t,H,\dot H]\bydef\rho(H(t)) t^{n-1} \left( \dot H(t)^2+(n-1) \frac{H^2(t)}{t^2}\right)^{n/2}$$ then the Euler-Lagrange equation is

\begin{equation}\label{elag}L_H = \frac{\partial}{\partial t} L_{\dot H} .\end{equation} Then \eqref{elag} is equivalent with \eqref{redu}, because $H\in C^2$. Further \eqref{elag} reduces to

\[\begin{split} M\bydef &\frac{1}{(n-1) H(t)^2+t^2 \dot H^2}(n-1) t^{n-1} \left(\frac{(n-1) H(t)^2}{t^2}+\dot H^2\right)^{n/2}
 \\ &\times \bigg(-n \rho[H(t)] \left(H(t)-t \dot H\right) \left((n-1) H(t)^2+(-2+n) t H(t) \dot H+t^2 \dot H^2\right)\\&-\left(H(t)^2-t^2 \dot H^2\right) \left((n-1) H(t)^2+t^2 \dot H^2\right) \rho'[H(t)]\\&+n t^2 \rho[H(t)] \left(H(t)^2+t^2 \dot H^2\right) \ddot H\bigg)=0.\end{split}\]

Now we have the following key formula for our approach
\[\begin{split}M&=\frac{(n-1) t^{-1-n} \left((n-1) H(t)^2+t^2 \dot H^2\right)}{\dot H}\\ &\times \frac{\partial}{{\partial t}} \left(\rho[H(t)]\left(H(t)^2-t^2 \dot H^2\right) \left((n-1) H(t)^2+t^2 \dot H^2\right)^{\frac{1}{2} (n-2)}\right)=0.\end{split}\]

Thus we obtain

\begin{equation}\label{banane}\mathcal{L}[H]=\rho[H(t)]\left(H(t)^2-t^2 \dot H^2\right) \left( H(t)^2+\frac{t^2 \dot H^2}{n-1}\right)^{\frac{1}{2} (n-2)}\equiv c. \end{equation}

Further we look at increasing diffeomorphisms $H$ between two intervals $[1,R]$ and  $[1,R_\ast]$ that are solutions of the previous equation. Then $$c=\rho[H(t)]\left(H(t)^2-t^2 \dot H^2\right) \left( H(t)^2+\frac{t^2 \dot H^2}{n-1}\right)^{\frac{1}{2} (n-2)}$$
Since the function $$\psi(b)\bydef \left(a^2-b^2\right) \left(a^2+\frac{b^2}{n-1}\right)^{\frac{1}{2} (n-2)}$$ is decreasing, because
$$\psi'(b)=-\frac{b \left(a^2+b^2\right) \left(a^2+\frac{b^2}{n-1}\right)^{n/2} (n-1) n}{\left(b^2+a^2 (n-1)\right)^2}$$ we obtain that
$$c\le \rho[H(t)]H(t)^n,$$ and thus $$c\le \min\{\rho[H(t)]H(t)^n,1\le |t|\le R\}=\rho[H(1)]H(1)^n=\rho(1).$$
Thus we conclude that if the equation has a solution then \begin{equation}\label{nitb}c\le c_\diamond\bydef \rho(1)\ \ \ \text{\bf (Nitsche inequality)}.\end{equation}

Let us demonstrate the connection of \eqref{nitb} with the standard Nitsche inequality.

In this special case $\rho\equiv 1$ and $n=2$. So the inequality \eqref{nitb} is equivalent with the inequality

$$H(t)^2-t^2 \dot H^2\le 1.$$ Assuming that $\dot H\ge 0$, $H(1)=1$ and $H(R)=R_\ast$  then the last inequality is equivalent with $$\int_1^R\frac{dr}{r} \le \int_1^{R_\ast} \frac{dH}{\sqrt{H^2-1}}$$ or what is the same $$ \log\left[R_\ast+\sqrt{R^2_\ast-1}\right]\ge \log R.$$ Thus we obtain \begin{equation}\label{stand}
R_\ast \ge \frac{1+R^2}{2R},\end{equation} and this is the standard Nitsche inequality. Recall that the condition \eqref{stand} is sufficient and necessary that there exists a planar harmonic diffeomorphism between annuli $A(1,R)$ and $A(1, R_\ast)$ (\cite{nconj}).

We will prove that the condition \eqref{nitb} is equivalent with the fact that there  exists  a radial $(\rho,n)-$harmonic diffeomorphism between given annuli and conjecture that
\begin{conjecture}
There is a $(\rho,n)$ harmonic mappings between annuli $A(1,R)$ and $A(1,R_\ast)$ if and only \eqref{nitb} holds.
\end{conjecture}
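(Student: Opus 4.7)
The plan is to split the conjecture into its two implications and handle them independently. Sufficiency (the Nitsche bound \eqref{nitb} implies existence of a $(\rho,n)$-harmonic diffeomorphism) should be tractable via an ODE shooting argument based on \eqref{banane}, while necessity (an arbitrary, possibly non-radial, $(\rho,n)$-harmonic diffeomorphism forces \eqref{nitb}) is the substantive difficulty.

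For sufficiency, I would treat \eqref{banane} as a first-order ODE parametrised by $c\in(0,\rho(1)]$. The first step is to solve it for $\dot H$: writing $v=t\dot H$, the relation $\rho(H)(H^2-v^2)(H^2+v^2/(n-1))^{(n-2)/2}=c$ determines $v$ implicitly, as a smooth function $v=V(H,c)$ with $V(H,c)\in[0,H)$, thanks to the monotonicity of $\psi(b)$ already noted in the excerpt. Then the autonomous ODE $t\dot H = V(H,c)$ with $H(1)=1$ defines an increasing diffeomorphism $H(\cdot,c)$ on $[1,R]$. Setting $\Phi(c):=H(R,c)$, I would show $\Phi$ is continuous and strictly decreasing in $c$ (larger $c$ forces smaller $V$, hence slower growth), with $\Phi(c)\to\infty$ as $c\to 0^+$ and $\Phi(\rho(1))$ equal to the critical Nitsche radius extracted from \eqref{nitb}. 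An intermediate value argument then supplies the constant $c$ realising any admissible $R_*$.

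For necessity, given any $(\rho,n)$-harmonic diffeomorphism $h\colon A(1,R)\to A(1,R_*)$, I would try to extract the Nitsche bound from the equilibrium equation \eqref{enhe}. Since \eqref{enhe} is a conservation law in divergence form, testing it against $\eta(x)=\phi(|x|)x$ with a smooth cutoff $\phi$ and integrating by parts should produce a one-parameter family of identities controlling the flux of the tensor $\Lambda$ in \eqref{eqvii} across spheres $|x|=t$. Combined with the inverse-energy identity \eqref{disener} applied to $h^{-1}$, this ought to yield an inequality of the form $c_{\mathrm{eff}}\le\rho(1)$ for an effective constant $c_{\mathrm{eff}}$ that reduces to the one in \eqref{banane} when $h$ is radial.

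The main obstacle is precisely this necessity direction for non-radial solutions. In the classical setting $n=2$, $\rho\equiv 1$, the Hopf differential \eqref{eqv} is holomorphic, giving the strong rigidity exploited in \cite{nconj}; in the generalized higher-dimensional setting, \eqref{enhe} is a fully nonlinear system and no holomorphicity survives. A plausible workaround is a symmetrization argument: show that spherical averaging of $h$ yields a radial competitor whose $(\rho,n)$-energy does not exceed that of $h$, so that existence of any $(\rho,n)$-harmonic diffeomorphism forces existence of a radial one, which by \eqref{banane} forces \eqref{nitb}. Proving that such rearrangement preserves the Sobolev homeomorphism class and does not increase the energy is delicate, and this is where the \emph{regular metric} hypothesis $\rho(s)s^n$ non-decreasing, together with the convexity properties of $\psi$ used above, must play its decisive role. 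I expect this symmetrization step to be the genuine sticking point, and the reason the statement is left as a conjecture in the excerpt.
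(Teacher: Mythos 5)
The statement you are attacking is left as a \emph{conjecture} in the paper: the author proves only the radial case (Theorem~\ref{newkalajrad}: a \emph{radial} $(\rho,n)$-harmonic diffeomorphism between $A(1,R)$ and $A(1,R_\ast)$ exists if and only if \eqref{nitb} holds) and verifies the conjecture on the class of energy minimizers (Theorem~\ref{newkalaj}). Your necessity direction --- extracting \eqref{nitb} from an arbitrary, possibly non-radial, $(\rho,n)$-harmonic diffeomorphism by testing \eqref{enhe} against radial vector fields, or by spherical symmetrization --- is exactly the open part, and as written it is only a heuristic: you do not show that the resulting flux identities produce the constant $c$ of \eqref{banane} for non-radial solutions, nor that spherical averaging of a Sobolev homeomorphism yields a homeomorphism of no larger $(\rho,n)$-energy; and even a successful energy-decreasing symmetrization would only reduce matters to the existence of a radial \emph{minimizer}, not of a radial harmonic diffeomorphism, unless one also knows that minimizers exist in the homeomorphism class and are diffeomorphisms. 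So the gap you flag is genuine, and it is precisely why the paper states this as a conjecture rather than a theorem.

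There is also a concrete error in your sufficiency half. You shoot with $c\in(0,\rho(1)]$ and claim that your $\Phi(c)=H(R,c)$ tends to $\infty$ as $c\to 0^{+}$. This is false: for $c\ge 0$ the elasticity $\eta_H=t\dot H/H$ satisfies $\eta_H\le 1$ (see \eqref{eta}), hence $H(t)\le t$ on $[1,R]$ and $\Phi(c)\le R$, with $\Phi(0)=R$ exactly. Thick image annuli require $c<0$, where the solution $V(H,c)$ of \eqref{banane} exceeds $H$; the paper therefore works with the full parameter range $c\in(-\infty,\rho(1)]$, using $\Psi\colon(-\infty,1]\to[0,\infty)$ and \eqref{negative}, and applies the intermediate value theorem to the function $\Lambda(c)$ (equivalently to \eqref{R}), exploiting $\Lambda(-\infty)=1$ and monotonicity in $c$. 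With that correction your shooting argument coincides with the proof of Theorem~\ref{newkalajrad}, which is the most the paper itself establishes toward the conjecture; the full statement remains unproved both in the paper and in your proposal.
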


The conjecture will be verified on the class of minimizers of $(\rho,n)$  energy (Theorem~\ref{newkalaj}).

 Let  $$\eta_H(t)=\eta(t)\bydef \frac{t^2(\dot H(t))^2}{H^2(t)},$$ and let $$\zeta(t)\bydef \eta^2(t).$$ Assume also that the constant  $c$ satisfies \eqref{nitb}. Then the equation $\mathcal{L}[H]=c$ is equivalent with the equation
$$(1-\eta^2(t))\left(1+\frac{\eta^2(t)}{n-1}\right)^{(n-2)/2}=v_c(H(t)),$$ or
$$ \Phi(\zeta)\bydef(1-\zeta(t))\left(1+\frac{\zeta(t)}{n-1}\right)^{(n-2)/2}=v_c(H(t)),$$
where $$v_c(H)\bydef\frac{c}{H^{n}\rho(H)}\le 1.$$
Since $$\Phi'(\zeta)=-\frac{1}{2} (n-1)^{1-\frac{n}{2}} n (1+\zeta) (n-1+\zeta)^{-2+\frac{n}{2}},$$
we conclude that $\Phi:[0,\infty)\to (-\infty ,1]$ is strictly decreasing  and smooth function and thus a diffeomorphism. Moreover $\Phi(0)=1$, $\Phi(1)=0$ and
$\Phi(\infty)=-\infty$. Let $\Psi=\Phi^{-1}:(-\infty ,1]\to [0,\infty) $. Then $\Psi$ is strictly decreasing as well with $\Psi(0)=1$ and thus
\begin{equation}\label{negative}
\Psi(\zeta)\ge 1, \ \ \ \text{if}\ \ \ \zeta\le 0.
\end{equation}
 Then $$\zeta(t)=\Psi(v_c(t)).$$

Further \begin{equation}\label{eta1}\eta_H(t)=\frac{tH'(t)}{H(t)}=\sqrt{\Psi(v_c(H(t)))}.\end{equation} Now by taking the initial condition $H(1)=1$, we arrive to the implicit solution  $$\log t = \int_1^s \frac{1}{y\sqrt{\Psi(v_c(y))}} \d y,$$ with $s=H(t)$.

Thus for $$T_c(s)\bydef\exp\left[\int_1^s \frac{1}{y\sqrt{\Psi(v_c(y))}} \d y\right],$$ the diffeomorphism  \begin{equation}\label{HcT}H_c\bydef T_c^{-1}\end{equation} is a solution of the equation $\mathcal{L}[H](t)=c$ with the initial condition $H(1)=1$ and $H'(1)\ge 0$. Further $T_c(R_\ast)=R$, where
\begin{equation}\label{R}R= \exp\left[\int_1^{R_\ast} \frac{1}{y\sqrt{\Psi(v_c(y))}} \d y\right].\end{equation}
Let us emphasis the following important fact. Every parameter from the set $\{R, R_\ast, c\}$ is uniquely determined by two others. More precisely, we have $$ c=c(R, R_\ast), \ \ R=R(c,R_\ast), \ \ \text{and}\ \  R_\ast=R_\ast(c, R).$$

Since $H_c(t)$ is increasing, then  $v_c(H_c(t))$ is decreasing for $c>0$ and increasing for $c<0$, and thus   $\sqrt{\Psi(v_c(H(t)))}$ increases for $c\ge 0$ and decreases for $c\le 0$. Thus we obtain that  \begin{equation}\label{eta}
\left\{
           \begin{array}{ll}
             \eta_H(t)\ge 1, & \hbox{ and $\eta_H(t)$ increases on $[1,R]$ if $c<0$;} \\
             \eta_H(t)\le 1, & \hbox{and  $\eta_H(t)$ decreases on $[1,R]$ if $c>  0$.}\\
              \eta_H(t)\equiv  1, & \hbox{on $[1,R]$ if $c=  0$.}
           \end{array}
         \right.
\end{equation}
Let $n\ge 4$ and let $\kappa_n$ be the solution of the equation $$(n-1+\eta^2) ^{\frac{n-2}{2}}(\eta^2-1)=\eta^n$$ on the interval $[1,\frac{\sqrt{n-1}}{\sqrt{n-3}}]$. Then for $n\ge 4$ we put  $c_\diamond:=-{\rho(1)}\kappa_n^n$. If $n=3$ we put $c^\diamond:=-\infty$.

Furthermore, if $c^\diamond\le c_1<c_2\le c_\diamond$, then $T_{c_1}(R_\ast)<T_{c_2}(R_\ast)$. Now if $H_{c_1}(R)=R_\ast$, then we have $R<T_{c_2}(H_{c_1}(R))$ and thus $H_{c_2}(R)\le H_{c_1}(R).$ If we use the convention $H_{c^\diamond}\equiv +\infty$ for $n=3$, then we infer that for every $n\ge 3$
\begin{equation}\label{cdiam}
H_{c_\diamond}(R)<H_{c^\diamond}(R), \ \ \ R>1.
\end{equation}
We conclude this section by proving the following theorem.
\begin{theorem}\label{newkalajrad}
Let $R>1$ be fixed. Let $\rho$ be a regular metric on $\mathbb{A}=A(1,R)$.   If $R_\ast>  1$, then there is a radial $(\rho,n)-$harmonic diffeomorphism $h=h_c$ between annuli $\mathbb{A}=A(1, R)$ and $\mathbb{A}^\ast=A(1, R_\ast)$ if and only if
\begin{equation}\label{Kalajnitsche} H_{c_\diamond}(R)\le R_\ast,\end{equation} or equivalently if
\begin{equation}\label{nitsche} c(R,R_\ast)\le \rho(1).\end{equation}
\end{theorem}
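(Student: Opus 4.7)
My plan is to split the ``if and only if'' into two directions and then deduce the equivalence of \eqref{Kalajnitsche} and \eqref{nitsche} from the strict monotonicity of $c\mapsto H_c(R)$ recorded just before \eqref{cdiam}.

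\textbf{Necessity (existence $\Rightarrow$ \eqref{nitsche}).} If $h(x)=H(|x|)x/|x|$ is a $C^2$ radial $(\rho,n)$-harmonic diffeomorphism with $H(1)=1$ and $H(R)=R_\ast$, the Euler--Lagrange reduction of Section~2 forces $H$ to satisfy the first integral $\mathcal{L}[H](t)\equiv c$. Strict monotonicity of $b\mapsto (a^2-b^2)(a^2+b^2/(n-1))^{(n-2)/2}$ together with the regularity hypothesis \eqref{regularm} then yields $c\le \min_t \rho(H(t))H(t)^n = \rho(1)=c_\diamond$. This argument is in fact the one culminating in \eqref{nitb}, and I would simply quote it.

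\textbf{Sufficiency (\eqref{Kalajnitsche} $\Rightarrow$ existence).} The candidate is the explicit diffeomorphism $H_c=T_c^{-1}$ from \eqref{HcT}, which by construction solves $\mathcal{L}[H_c]\equiv c$ with $H_c(1)=1$; the task is to select $c\le c_\diamond$ so that $H_c(R)=R_\ast$. I would study the map $\Xi(c):=H_c(R)=T_c^{-1}(R)$ on $(-\infty,c_\diamond]$ and verify: (i)~$\Xi$ is continuous in $c$, via continuity of $(c,y)\mapsto 1/(y\sqrt{\Psi(v_c(y))})$ combined with an integrable majorant near the (only possible) singularity at $y=1$; (ii)~$\Xi$ is strictly decreasing, since decreasing $c$ decreases $v_c(y)$ pointwise, hence increases $\Psi(v_c(y))$, hence decreases $T_c$ and increases $\Xi$ (this is the content of \eqref{cdiam}); (iii)~$\Xi(c_\diamond)=H_{c_\diamond}(R)$ by definition; (iv)~$\Xi(c)\to +\infty$ as $c\to-\infty$. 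Together these give a continuous strictly decreasing bijection of $(-\infty,c_\diamond]$ onto $[H_{c_\diamond}(R),+\infty)$, so the hypothesis \eqref{Kalajnitsche} uniquely determines the required $c$, and $h_c(x)=H_c(|x|)x/|x|$ is the desired diffeomorphism. The equivalence of \eqref{Kalajnitsche} and \eqref{nitsche} is the same bijection applied to $c=c(R,R_\ast)$.

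\textbf{Main obstacle.} The argument hinges on two analytical facts that are not made fully explicit earlier in the section. First, the finiteness of $H_{c_\diamond}(R)$: at $c=c_\diamond$ one has $v_{c_\diamond}(1)=1$ and $\Psi(1)=0$, so the integrand defining $\log T_{c_\diamond}$ blows up at $y=1$; a linearisation of $\Phi$ near $\zeta=0$ (using $\Phi'(0)\neq 0$), together with the $C^1$ regularity of $\rho$ showing $v_{c_\diamond}(y)-1=O(y-1)$, pins the singularity down to the integrable order $(y-1)^{-1/2}$. Second, claim~(iv) above: as $c\to-\infty$ one has $v_c(y)\to-\infty$ uniformly in $y\in[1,R_\ast]$, hence $\Psi(v_c(y))\to+\infty$, the integrand of $\log T_c$ tends to zero, and dominated convergence gives $T_c(R_\ast)\to 1$ for any fixed $R_\ast$; inverting yields $\Xi(c)\to+\infty$. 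Once these two asymptotics are in place, the remaining content of the theorem is essentially a repackaging of the characteristic equation \eqref{banane} and the explicit formula \eqref{HcT}.
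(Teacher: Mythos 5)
Your proposal is essentially the paper's own proof: necessity is exactly the first-integral argument of Section~2 culminating in \eqref{nitb}, and sufficiency is the same monotonicity-plus-intermediate-value argument in the parameter $c$ for the family $H_c=T_c^{-1}$ — the paper merely fixes $R_\ast$ and solves $T_c(R_\ast)=R$ via the increasing, continuous function $\Lambda(c)=T_c(R_\ast)$ with $\Lambda(-\infty)=1$, whereas you fix $R$ and solve $H_c(R)=R_\ast$, which is the same bijection read in the other variable. One small slip in your ``main obstacle'': convergence of $\int_1 \frac{dy}{y\sqrt{\Psi(v_{c_\diamond}(y))}}$ at $y=1$ requires a \emph{lower} bound $1-v_{c_\diamond}(y)\ge c_0(y-1)$ (i.e.\ genuinely positive growth of $\rho(s)s^n$ at $s=1$), not the upper bound $v_{c_\diamond}(y)-1=O(y-1)$ that $C^1$ regularity supplies; this endpoint finiteness is, however, not needed for the intermediate-value step, since the hypothesis $R_\ast\ge H_{c_\diamond}(R)$ can be read directly as $T_{c_\diamond}(R_\ast)\ge R$, which is how the paper's formulation avoids the issue.
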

\begin{proof}
 Let $R_\ast\ge H_{ c_\diamond }(R)$. We prove that there is $c\le  c_\diamond $ so that $H_{c}(R)=R_\ast$. Then \begin{equation}\label{hhc}h_c(x)\bydef H_c(|x|)\frac{x}{|x|}\end{equation} is a $n-$harmonic diffeomorphism between $A(1, R)$ and $A(1, R_\ast)$. In order to do so define the function $$\Lambda(c)=\exp\left[\int_1^{R_\ast} \frac{1}{h\sqrt{\Psi(v_c(h))}} dh\right].$$

Then $\Lambda(c)$ is continuous for $c\in(-\infty, c_\diamond ]$. Moreover the function $c\to \Psi(v_c(y))$ is increasing for fixed $y$ and so $c\to y\sqrt{\Psi(v_c(y))}$ is strictly decreasing. So $c\to \frac{1}{y\sqrt{\Psi(v_c(y))}}$ is increasing and thus $\Lambda$ is increasing. As $\Lambda(-\infty)=1$, by Mean value theorem there is a unique $c$ so that $\Lambda(c)=R_\ast$.

To prove the opposite part, assume that $h=H(|x|)\frac{x}{|x|}$ is a harmonic diffeomorphism between annuli $A(1,R)$ and $A(1,R_\ast)$. Then, because of Lemma~\ref{leiwa}, for a constant $c$,
$\mathcal{L}[H]=c$. Further, $H$ is a diffeomorphism, and so $c\le  c_\diamond $. It follows that \eqref{Kalajnitsche}.
\end{proof}
\section{The main result}
\begin{theorem}\label{newkalaj} Assume that  $\rho$ is a regular metric in $\mathbb{A}_\ast=A(1,R_\ast)$, $R_\ast>1$.

a) Let $R>1$ be fixed. We have the sharp inequality \begin{equation}\label{sharpineq}\int_{A(1,R)}\rho(|h|) \|Dh\|^n \ge  \int_{A(1,R)}\rho(|h_c|) \|Dh_c\|^n ,\end{equation} for orientation preserving homeomorphisms of class $\W^{1,n}$ between $A(1,R)$ and $A(1, R_\ast)$ mapping the inner boundary onto the inner boundary   if
 \begin{equation}\label{Kalajnitsche1} H_{c_\diamond}(R)\le R_\ast\le H_{c^\diamond}(R),\end{equation} or in its equivalent form if
  \begin{equation}\label{nitsche1} -{\rho(1)}\kappa_n^n\le  c\le \rho(1).\end{equation}
  The equality is attained if and only if $h=\mathcal{T} h_c$ where $\mathcal{T}$ is a linear isometry of $\mathbf{R}^n$.

 b) If $R_*>1$ and if \begin{equation}\label{cr}c<-\frac{2\rho(R_*)R_*^n}{{n-2}} \left(\frac{n-2}{n-3}\right)^{n/2},\end{equation} then the $(\rho,n)$-harmonic diffeomorphism $h_c=H_c(|x|)\frac{x}{|x|}:A(1,R)\to A(1,R_*)$ is not the minimizer of the functional of energy. Here $R=R(c,R_*)$ is defined in \eqref{R}.
\end{theorem}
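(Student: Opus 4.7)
The plan for part (a) is to apply the method of free (null) Lagrangians, using the Euler--Lagrange identity $\mathcal{L}[H_c]\equiv c$ from \eqref{banane} as a calibration for $h_c$. First, I would pass to the inverse map $f=h^{-1}:\mathbb{A}_\ast\to\mathbb{A}$ via \eqref{disener}, so that minimizing $\mathscr{E}_\rho[h]$ becomes equivalent to minimizing $n^{n/2}\int_{\mathbb{A}_\ast}\rho(|y|)\,\mathbb{K}_I[f,y]\,\d y$ over orientation-preserving homeomorphisms $f$ with the corresponding boundary behaviour. In spherical coordinates $y=s\sigma$ I would decompose $Df$ into its radial and tangential parts and rewrite the integrand, seeking a pointwise decomposition of the form
\begin{equation*}
\rho(|y|)\,\mathbb{K}_I[f,y]\;\ge\;\rho(|y|)\,\mathbb{K}_I[f_c,y]\;+\;\Div\,\Xi(y,f,Df),
\end{equation*}
in which the divergence term integrates to zero by the shared boundary data of $f$ and $f_c$, and the residual inequality is a sharp algebraic estimate in the singular values of $Df$. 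Integrating and applying \eqref{disener} in reverse then yields \eqref{sharpineq}; saturation forces $f$ to agree with $f_c$ up to a linear isometry of $\mathbb{R}^n$, giving the rigidity statement.

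The main obstacle in (a) is this pointwise algebraic inequality, whose range of validity is precisely $-\rho(1)\kappa_n^n\le c\le \rho(1)$. The sign of $c$ determines the geometry of $h_c$: by \eqref{eta}, $c\ge 0$ yields $\eta_{H_c}\le 1$ (tangential stretch dominates), while $c<0$ yields $\eta_{H_c}\ge 1$ and increasing (radial stretch dominates). The upper endpoint is just the Nitsche bound \eqref{nitb}. The lower endpoint arises because the auxiliary function $\eta\mapsto (n-1+\eta^2)^{(n-2)/2}(\eta^2-1)-\eta^n$ must remain non-positive along the trajectory $\eta=\eta_{H_c}(t)$; by construction, $\kappa_n$ is its first zero on $[1,\sqrt{(n-1)/(n-3)}]$, and beyond this value the calibration inequality fails. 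For $c\ge 0$ the argument can be adapted from \cite{memoirs}; the new contribution is the regime $c<0$, where a different algebraic configuration must be identified.

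For part (b), the plan is to produce an explicit non-radial competitor $\widetilde h$ satisfying $\mathscr{E}_\rho[\widetilde h]<\mathscr{E}_\rho[h_c]$. Since $h_c$ is a critical point of $\mathscr{E}_\rho$ (by Lemma~\ref{leiwa} and \eqref{redu}), the first variation vanishes and the sign of the energy difference is controlled by the second variation. I would test perturbations $\widetilde h=h_c+\epsilon\eta$ of the form $\eta(t\xi)=\phi(t)\,v(\xi)$, with $v$ a smooth tangential vector field on $S^{n-1}$ (chosen divergence-free, so as not to alter volume to first order) and $\phi$ a cut-off localised near $t=R$, where the density of $\mathscr{E}_\rho[h_c]$ is heaviest. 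Expanding $\mathscr{E}_\rho[\widetilde h]-\mathscr{E}_\rho[h_c]$ to order $\epsilon^2$ produces a quadratic form in $(\phi,v)$ whose leading coefficient depends on $c$, $\rho(R_\ast)$ and $R_\ast$; the threshold \eqref{cr} is precisely where this coefficient first becomes negative when optimised over admissible $(\phi,v)$. The main difficulty is identifying the sharp optimiser; the factor $\bigl(\tfrac{n-2}{n-3}\bigr)^{n/2}$ and the implicit restriction $n\ge 4$ in \eqref{cr} suggest that the optimal $v$ is a tangential Laplace eigenmode on $S^{n-1}$ of a specific degree, and the construction genuinely requires $n-1\ge 3$ tangential degrees of freedom, consistent with $c^\diamond=-\infty$ for $n=3$.
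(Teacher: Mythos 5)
Your plan for part (a) has a structural gap at exactly the point where the real work lies. You propose to pass to the inverse map and to calibrate with a pointwise decomposition whose divergence term ``integrates to zero by the shared boundary data of $f$ and $f_c$''. But in this problem there is no shared boundary data: the admissible class consists of homeomorphisms that merely send the inner boundary component to the inner boundary component, while the actual boundary values slip freely along the two spheres. Consequently exact divergence (null Lagrangian) terms are \emph{not} determined by the data, and a calibration of the form you write cannot be closed by Stokes' theorem. This is precisely why the paper (following Iwaniec--Onninen) replaces calibration by \emph{free Lagrangians}: the pointwise inequalities of Lemma~\ref{le5} (for $0\le c$) and Lemma~\ref{le7} (for $c_\diamond\le c\le 0$), applied with $\sigma=\eta(|h(x)|)$ chosen from the characteristic equation \eqref{banane}, bound $\rho(|h|)\|Dh\|^n$ below by terms such as $\Phi(|h|)\,|h_N||h_T|^{n-1}$ and powers of $|h_T|/|h|$ or $|h_N|/(|h|\,\eta(|h|))$, whose integrals are estimated for \emph{every} homeomorphism in the class by Corollary~\ref{rrjedh} (inequalities \eqref{E184}, \eqref{vienna12}) together with a H\"older step; equality in these estimates holds for the radial map, which yields \eqref{sharpineq}. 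You correctly locate the role of $\kappa_n$ (it is the requirement $a(\sigma)\le 1$ in Lemma~\ref{le7}), but you leave the pointwise inequality itself -- which you call the main obstacle -- unproved, so the argument is a program rather than a proof. The detour through $f=h^{-1}$ and \eqref{disener} is also unnecessary and imports an extra regularity issue ($h^{-1}\in\W^{1,n-1}$ must be justified); the paper works directly with $h$ and obtains the inverse (inner distortion) statement only afterwards, as a corollary.

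For part (b) your second-variation philosophy is in the right spirit, but the decisive construction is missing and your reading of \eqref{cr} overstates it. The paper does not optimize over tangential eigenmodes with a cut-off near $t=R$; it tests with the explicit global family $h_\lambda(x)=H_c(|x|)\,\Phi^\lambda(x/|x|)$, where $\Phi^\lambda$ is the conformal spherical homothety of $\mathbf{S}^{n-1}$, and invokes the inequality \eqref{E349} of \cite{memoirs}, which produces a strict energy decrease for $\lambda$ near $1$ provided the elasticity satisfies $\eta_{H_c}(t)>\sqrt{(n-1)/(n-3)}$; the hypothesis \eqref{cr} is used, via the characteristic equation and the monotonicity \eqref{eta}, exactly to guarantee this lower bound on $\eta_{H_c}$ for all $t$, and it is a \emph{sufficient} condition, not the sharp threshold of a second variation optimized over all perturbations. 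Two further steps in the paper are absent from your outline: the verification that the infimum of the energy over radial maps is attained by $H_c$ (so that beating the radial energy indeed disproves minimality of $h_c$), and the fact that the competitor must remain an admissible homeomorphism -- automatic for $h_\lambda$, but something you would have to check for $h_c+\epsilon\eta$. Without the computation showing your quadratic form goes negative under \eqref{cr}, and with the eigenmode optimality only conjectured, part (b) of your proposal also remains incomplete.
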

By using \eqref{disener} and Theorem~\ref{newkalaj} we obtain

\begin{corollary} Assume that $\mathbb{A}_\ast=A(1,R_\ast)$ is an annulus and  assume that $\rho$ is a regular metric on $\mathbb{A}_\ast$. For  $-{\rho(1)}\kappa_n^n\le  c\le \rho(1)$ let  $R=R(c,R_\ast)$ and let $f^c=(h_c)^{-1}$. Then we have the following sharp inequality

\begin{equation}\label{sharpineq1}\int_{\mathbb{A}_\ast}\rho(y) \mathbb{K}_I[f,y] \ge  \int_{\mathbb{A}_\ast}\rho(y) \mathbb{K}_I[f^c,y],\end{equation} for every homeomorphism $f:\mathbb{A}_\ast\to \mathbb{A}$ preserving the inner boundary and the orientation and belonging to the Sobolev space $\W^{1,n-1}$.

\end{corollary}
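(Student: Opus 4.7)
The plan is to transfer Theorem~\ref{newkalaj}(a), which is an inequality between $(\rho,n)$-energies of $\W^{1,n}$ homeomorphisms $\mathbb{A}\to\mathbb{A}_\ast$, to a dual inequality for inner-distortion integrals of $\W^{1,n-1}$ homeomorphisms $\mathbb{A}_\ast\to\mathbb{A}$, using the change-of-variables identity \eqref{disener} as the single bridge.

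First, I would unpack the hypothesis. Let $f\colon\mathbb{A}_\ast\to\mathbb{A}$ be an orientation-preserving homeomorphism of class $\W^{1,n-1}$ that sends the inner boundary onto the inner boundary. Appealing to the results on integrability of inverses recorded in \cite{mali,heko} and already invoked in the paragraph preceding \eqref{disener}, the inverse $h:=f^{-1}\colon\mathbb{A}\to\mathbb{A}_\ast$ lies in $\W^{1,n}(\mathbb{A},\mathbb{A}_\ast)$, is orientation preserving, and fixes the inner boundary. Moreover \eqref{disener} reads
\begin{equation*}
\int_{\mathbb{A}}\rho(h(x))\,\|Dh(x)\|^n\,\dtext x \;=\; n^{n/2}\int_{\mathbb{A}_\ast}\rho(y)\,\mathbb{K}_I[f,y]\,\dtext y,
\end{equation*}
and since $\rho$ is radial, $\rho(h(x))=\rho(|h(x)|)$, so the left-hand side is exactly the expression appearing in \eqref{sharpineq}.

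Next, note that by the very definition $R=R(c,R_\ast)$, the radial harmonic minimizer $h_c$ from \eqref{hhc} is a diffeomorphism from $A(1,R)$ onto $A(1,R_\ast)$, and its inverse is precisely $f^c$; in particular the identity \eqref{disener} applied to the pair $(f^c,h_c)$ gives the companion equality with $\mathbb{K}_I[f^c,\cdot]$ on the right. Since the admissibility range $-\rho(1)\kappa_n^n\le c\le\rho(1)$ is exactly \eqref{nitsche1}, Theorem~\ref{newkalaj}(a) applies to $h$ and $h_c$ and yields
\begin{equation*}
\int_{A(1,R)}\rho(|h|)\,\|Dh\|^n\;\ge\;\int_{A(1,R)}\rho(|h_c|)\,\|Dh_c\|^n.
\end{equation*}
Substituting \eqref{disener} on both sides and cancelling the common factor $n^{n/2}$ produces \eqref{sharpineq1}.

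Sharpness and the equality statement are inherited from Theorem~\ref{newkalaj}(a): equality in the $\W^{1,n}$ inequality forces $h=\mathcal{T}h_c$ for some linear isometry $\mathcal{T}$ of $\mathbf{R}^n$, and taking inverses gives $f=\mathcal{T}^{-1}\circ f^c$; since $\mathbb{K}_I$ is invariant under pre-composition with isometries and $\rho$ is radial, the integral $\int_{\mathbb{A}_\ast}\rho(y)\mathbb{K}_I[f,y]\,\dtext y$ is unchanged by such a substitution, confirming that the minimum is attained precisely on the family $\{\mathcal{T}^{-1}\circ f^c\}$. The only delicate point in the argument is invoking \cite{mali,heko} to pass from $f\in\W^{1,n-1}$ to $h=f^{-1}\in\W^{1,n}$ with the identity \eqref{disener} holding in full generality (not merely for smooth $f$); but this is precisely the content cited in the introduction, so no new analytic work is required here.
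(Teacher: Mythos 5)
Your proposal is correct and follows essentially the same route as the paper, which derives the corollary directly by combining the change-of-variables identity \eqref{disener} (with the $\W^{1,n-1}\Rightarrow\W^{1,n}$ inverse regularity from \cite{mali,heko}) with Theorem~\ref{newkalaj}(a). Only a cosmetic slip in the equality discussion: from $h=\mathcal{T}h_c$ one gets $f=f^c\circ\mathcal{T}^{-1}$ rather than $\mathcal{T}^{-1}\circ f^c$, which does not affect the argument.
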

\begin{remark}

The question arises how general can be two double connected domains, in order to obtain similar result.
\begin{itemize}
  \item

 Instead of $A(1, R)$ and $A(1, R_\ast)$, we could take the annuli $A(r, R)$ and $A(r_\ast
, R_\ast)$. The last case reduces to the previous one because the $(\rho,n)$-harmonic mappings are invariant under homothety of domain and of image domain. Namely if $h$ is $(\rho,n)$ harmonic mapping between annuli $A(r, R)$ and $A(r_\ast, R_\ast)$, then $\lambda h(\mu x)$ is harmonic as well w.r.t. the metric $\rho(\mu|x|)$ between annuli $A(\mu r, \mu R)$ and $A(\lambda r_\ast, \lambda R_\ast)$.

\item If $h:A(1,R)\to A(1, R_\ast)$ is a harmonic homeomorphism that map the inner boundary onto the outer  boundary, then $$h_1(x)=h\left (R\frac{x}{|x|^2}\right):A(1,R)\to A(1, R_\ast)$$ that map the inner boundary onto the inner boundary. This follows from the fact that the class of $n-$harmonic mappings is invariant under postcomposing by conformal mappings of the space, exactly as in the planar case. More precisely, if $h:D\to \mathbf{R}^n$ is $(\rho,n)-$harmonic, then $h\circ T$ is $(\rho,n)-$ harmonic in $D'=T^{-1}(D)$, for every metric $\rho$ and every M\"obius transformation $T$ on the space $\mathbf{R}^n$. Here $D\subset \mathbf{R}^n$ is an open subset. This follows from the following formulas
\[\begin{split}\mathscr{E}_\rho[h]&=\int_D \rho(h(x)) \|Dh(x)\|^n dx\\&= \int_D \rho(h(x)) \|Dh(x)\|^n dx\\&=\int_{D'} \rho(h(T(y))) \|Dh(T(y))\|^n J_T(y)dy
\\&= \int_{D'} \rho(h\circ T(y)) \|Dh(T(y))\|^n |DT(y)|^n dy\\&=\int_{D'} \rho(h\circ T(y)) \|D(h\circ T)(y)\|^n  dy=\mathscr{E}_\rho[h\circ T]\end{split}\]
Here $|DT(y)|\bydef \max_{|h|=1}|DT(y) h|$. Thus if $h$ is a stationary point of energy integral, then so is $h\circ T$.

 \item If $f$ is a $n-$harmonic mapping between annuli $A(r,R)$ and $A(r_\ast, R_\ast)$ w.r.t. the metric $\rho(|w|)$, then $\tilde  f(x)=\frac{f(x)}{|f(x)|^2}$ is a $n-$harmonic mapping between annuli $A(r,R)$ and $A(1/R_\ast, 1/r_\ast)$ w.r.t. the metric $$\tilde \rho(|\omega|)=|\omega|^n{\rho\left(\frac{1}{|\omega|}\right)}.$$
Namely, if $g(x)=\frac{x}{|x|^2}$, then $g$ is conformal and thus $$\left<Dg(x)h, Dg(x)k\right> =|Dg(x)|^2\left<h,k\right>,$$ where $$|Dg(x)|=\max_{|h|=1}|Dg(x)h|.$$
 Here $$Dg(x)h = \frac{h}{|x|^2}- \frac{x\left<x,h\right>}{|x|^4},$$ and thus $$\|Dg(x)\|^2=n|Dg(x)|^2=\frac{n}{|x|^2}.$$ Further we obtain
 \[\begin{split}\|D\tilde f\|^2&= \mathrm{Tr}((D\tilde f)^{*}D\tilde f )=\sum_{k=1}^n \left<D\tilde f e_k, D\tilde f e_k\right>
 \\&=\sum_{k=1}^n \left<g'(f(x))D f e_k, g'(f(x))D f e_k\right>
 =\frac{\|D f\|^2}{|f|^2},\end{split}\]and so $$\mathcal{E}_{\tilde \rho}[ \tilde f]=\mathcal{E}_{\rho}[f].$$ So if $f$ is the minimizer of $\mathcal{E}_{\rho}$ then $\tilde f$ is the minimizer of $\mathcal{E}_{\tilde \rho}$.
 \item  The main result can be formulated for a little more general case, namely for two double connected domains whose boundry components are two spheres (which are not concentric). Namely for annuli $\mathcal{A}=T_1(\mathbb{A})$ and $\mathcal{A}_\ast=T_1(\mathbb{A}_\ast)$, where $T_1$ and $T_2$ are certain   M\"obius transformations of the space $\mathbb{R}^n$. The class of conformal mappings on the space is very rigid, indeed it coincides with the class of M\"obius transformations. The planar case is far more interesting but also more difficult in this context (cf. \cite{invent,calculus}).
     \end{itemize}
\end{remark}
\begin{remark}
If  we take the substitution  $$\eta(t)=\frac{tH'(t)}{H(t)}$$ in \eqref{banane} we obtain  $$n+\frac{n t \left(1+\eta[t]^2\right) \eta'[t]}{\left(-1+\eta[t]^2\right) \left(n-1+\eta[t]^2\right)}+\frac{H(t) R'[H(t)]}{R[H(t)]}=0.$$
In particular if $\rho(s)=s^\nu$, we have \begin{equation}\label{etazeta}\frac{ \left(1+\eta[t]^2\right) \eta'[t]}{\left(1-\eta[t]^2\right) \left(n-1+\eta[t]^2\right)}=\frac{n+\nu}{nt}.\end{equation} By following the approach as in \cite{memoirs}, where are considered the special cases $\nu=0$ and $\nu=-n$, we can find that the solution $H=H_c$ can be expressed by mean of the so called elasticity function $\eta=\eta_H$, and has the similar features as in the case $\nu=0$ (see \cite[p.~35-42]{memoirs}). However we do not need those properties in order to prove our main result. Instead, we use only some general  results regarding the modulus of annuli obtained in \cite{memoirs} (Corollary~\ref{rrjedh}).
\end{remark}

For $x\in {\mathbb A}$ let $N=\frac{x}{|x|}$. Further let $T_2$, $\dots$, $T_n$ be $n-1$ unit vectors mutually orthogonal and orthogonal to $N$. Denote by $h_N$, $h_{T_2}$, $\dots$, $h_{T_{n}}$ the corresponding directional derivatives. Use the notation $$\left|h_T\right|=\sqrt{\frac{|h_{T_2}|^2+\dots +|h_{T_n}|^2}{n-1}}.$$ Then we have
\begin{corollary}\label{rrjedh}\cite{memoirs}.
Let $h$ be a homeomorphism between spherical rings ${\mathbb A}$ and
${\mathbb A}^\ast$ in the Sobolev class  $ {\mathscr
W}^{1,n}({\mathbb A}, {\mathbb A}^\ast)$. Then
\begin{equation}\label{E184}
 \int_{\mathbb A} \Phi \big( |h| \big ) \left|h_N\right|\,  \left|h_T\right|^{n-1} \ge \omega_{n-1} \int_{1}^{R_\ast} \tau^{n-1} \Phi(\tau)\, d\tau
\end{equation}
whenever  $\Phi$ is  integrable in $[1, R_\ast]$. We have the
equality in (\ref{E184}) if and only if \begin{equation}\label{jaceq}\left|h_N\right|\,
\left|h_T\right|^{n-1}=J_h(x).\end{equation} Furthermore,
\begin{equation}\label{E185}
\int_{\mathbb A} \frac{\left|h_N\right|}{|h|\, |x|^{n-1}} \ge
\textnormal{Mod}\, {\mathbb A}^\ast
\end{equation}
\begin{equation}\label{vienna12}
\int_{\mathbb A} \frac{ \left|h_T\right|^{n-1}}{|h|^{n-1}\, |x|} \ge   \textnormal{Mod}\, {\mathbb A}
\end{equation}
\end{corollary}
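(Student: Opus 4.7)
\emph{Plan.} The three inequalities of Corollary~\ref{rrjedh} share a common architecture: a pointwise multilinear bound on $Dh$, combined with spherical integration and the area formula. I would dispatch them in one sweep organized around these two ingredients.

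\emph{Inequality \eqref{E184}.} The pointwise core is $J_h(x) \le |h_N(x)|\,|h_T(x)|^{n-1}$. Writing $Dh$ as a matrix whose columns, in the orthonormal basis $(N,T_2,\ldots,T_n)$, are $h_N, h_{T_2},\ldots,h_{T_n}$, Hadamard's inequality gives $J_h \le |h_N|\prod_{i=2}^n |h_{T_i}|$, and AM--GM on $|h_{T_i}|^2$ yields $\prod_{i=2}^n |h_{T_i}| \le |h_T|^{n-1}$. Equality forces these columns to be pairwise orthogonal with equal tangential norms, which is exactly the condition \eqref{jaceq}. Multiplying by $\Phi(|h|)\ge 0$, integrating, and invoking the area formula for the Sobolev homeomorphism $h$ produces
\[
\int_{\mathbb A}\Phi(|h|)|h_N||h_T|^{n-1}\,dx \;\ge\; \int_{\mathbb A}\Phi(|h|)J_h\,dx \;=\; \int_{\mathbb A^\ast}\Phi(|y|)\,dy \;=\; \omega_{n-1}\int_1^{R_\ast}\tau^{n-1}\Phi(\tau)\,d\tau.
\]

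\emph{Inequalities \eqref{E185} and \eqref{vienna12}.} Both fall to spherical coordinates $x=r\omega$, $dx=r^{n-1}\,dr\,d\sigma(\omega)$. In \eqref{E185} the factor $|x|^{n-1}$ is absorbed, leaving the reduction $\int_1^R |h_N(r\omega)|/|h(r\omega)|\,dr \ge \log R_\ast$ for a.e.\ $\omega$; this is the fundamental theorem of calculus on the radial slice combined with $|h_N|=|\partial_r h|\ge |\partial_r|h||$ and the boundary correspondence $|h|=1$, $|h|=R_\ast$ on the two spheres. In \eqref{vienna12}, after cancellation of the $r$-powers, it suffices to establish $\int_{S^{n-1}}(r|h_T|/|h|)^{n-1}\,d\sigma(\omega) \ge \omega_{n-1}$ for a.e.\ $r$. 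I would introduce the sphere-to-sphere map $\phi_r(\omega):= h(r\omega)/|h(r\omega)|$. The Hadamard--AM--GM argument of the previous paragraph bounds its tangential Jacobian from above by exactly $(r|h_T|/|h|)^{n-1}$, while $\phi_r$ has topological degree $\pm 1$ (inherited from $h$ being a homeomorphism sending sphere to sphere at the boundary), so the area formula on $S^{n-1}$ gives $\int_{S^{n-1}}\mathbf J_{\phi_r}\,d\sigma \ge \omega_{n-1}$.

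\emph{Main obstacle.} The linear-algebra steps are elementary; the serious point is the analytic justification at the regularity $\mathscr W^{1,n}(\mathbb A,\mathbb A^\ast)$. One must verify that for a.e.\ $\omega\in S^{n-1}$ the radial trace $r\mapsto h(r\omega)$ is absolutely continuous, that for a.e.\ $r\in[1,R]$ the spherical restriction is Sobolev on $r S^{n-1}$ with a well-defined topological degree, and that the change of variables $y=h(x)$ is valid without Luzin $(N)$ failure. These ACL-in-spherical-coordinates and degree-theoretic facts for Sobolev homeomorphisms constitute the technical heart of the argument carried out in \cite{memoirs}.
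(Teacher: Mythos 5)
The paper does not actually prove Corollary~\ref{rrjedh} --- it is quoted directly from \cite{memoirs} --- and your sketch reproduces essentially the free-Lagrangian argument of that source: Hadamard's inequality plus AM--GM give the pointwise bound $J_h\le \left|h_N\right|\left|h_T\right|^{n-1}$ and the area formula yields \eqref{E184}, while absolute continuity on a.e.\ radial segment and the degree of the projected sphere maps $\phi_r(\omega)=h(r\omega)/|h(r\omega)|$ give \eqref{E185} and \eqref{vienna12}. The outline is correct as far as it goes (two small caveats: \eqref{E184} requires $\Phi\ge 0$, and in \eqref{vienna12} one should use $\bigl|\int_{S^{n-1}}J_{\phi_r}\,d\sigma\bigr|=|\deg \phi_r|\,\omega_{n-1}$, i.e.\ the absolute value of the degree), and the technical points you flag --- ACL along a.e.\ ray and a.e.\ sphere, Luzin's condition (N) for $\W^{1,n}$ homeomorphisms, and the degree formula for continuous Sobolev sphere maps --- are exactly what \cite{memoirs} supplies.
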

Note that we have equalities  if $h$ is a radial mapping.

We also need the following simple lemmas.
\begin{lemma}\label{le5}\cite{memoirs}
Let $u,v \ge 0$ and $0 \le  \sigma \le 1$. Then
\begin{equation}
\left[u^2 +(n-1)v^2\right]^\frac{n}{2} \ge a(\sigma)\,  v^n +b(\sigma)\, u
v^{n-1}
\end{equation}
where
\begin{equation}\label{aal}
a(\sigma)= (n-1) \left(\sigma^2 +n-1\right)^\frac{n-2}{2} \left(1-
\sigma^2\right)
\end{equation}
and
\begin{equation}\label{eqb}
b(\sigma)=  n \sigma \left(\sigma^2 +n-1 \right)^\frac{n-2}{2}
\end{equation}
Equality holds if and only if $u = \sigma v$.
\end{lemma}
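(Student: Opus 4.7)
The plan is to reduce the two-variable inequality to a one-variable convexity statement. The case $v=0$ is immediate: the left-hand side is $u^n$, the right-hand side is $0$, and equality holds precisely when $u=0=\sigma v$. For $v>0$, I would divide both sides by $v^n$ and introduce $t:=u/v\ge 0$, so that the claim becomes
\[
F(t):=\bigl(t^2+n-1\bigr)^{n/2}\;\ge\; a(\sigma)+b(\sigma)\,t=:G(t),\qquad t\ge 0,
\]
with equality iff $t=\sigma$.

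The key observation is that the affine function $G$ is precisely the tangent line to $F$ at $t=\sigma$. Indeed, differentiating gives $F'(t)=nt(t^2+n-1)^{(n-2)/2}$, so $F'(\sigma)=b(\sigma)$; and a short factorization of $(\sigma^2+n-1)^{(n-2)/2}$ out of $F(\sigma)-\sigma F'(\sigma)$ yields exactly $(n-1)(1-\sigma^2)(\sigma^2+n-1)^{(n-2)/2}=a(\sigma)$. Thus the inequality $F\ge G$ is equivalent to the statement that $F$ lies above its tangent line at $\sigma$.

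To close the argument I would verify strict convexity of $F$ on $[0,\infty)$ by a direct computation:
\[
F''(t)=n(n-1)\bigl(t^2+n-1\bigr)^{(n-4)/2}\bigl(t^2+1\bigr)>0 \qquad \text{for } n\ge 2.
\]
Strict convexity then forces $F(t)>G(t)$ for $t\ne\sigma$ and $F(\sigma)=G(\sigma)$, which translates back to the sharp equality condition $u=\sigma v$. I do not foresee any real obstacle; the only subtlety is recognizing the right-hand side as the tangent-line expansion of $F$ at $\sigma$, after which strict convexity of $t\mapsto(t^2+n-1)^{n/2}$ closes the argument.
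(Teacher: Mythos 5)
Your proof is correct: the reduction to $t=u/v$, the identification of $a(\sigma)+b(\sigma)t$ as the tangent line to $F(t)=(t^2+n-1)^{n/2}$ at $t=\sigma$ (both derivative and intercept computations check out), and the strict convexity $F''(t)=n(n-1)(t^2+1)(t^2+n-1)^{(n-4)/2}>0$ together give the inequality with the sharp equality case $u=\sigma v$, including the boundary case $v=0$. The paper itself offers no proof of this lemma---it is quoted from Iwaniec and Onninen---and your tangent-line/convexity argument is precisely the standard elementary proof of such pointwise estimates, so there is nothing to add.
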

\begin{lemma}\label{le7}\cite{memoirs}
Let $u,v \ge 0$ and $1 \le \sigma < \sigma_n$. Then
\begin{equation}\label{ES208}
a=a(\sigma)\bydef \frac{(\sigma^2+n-1)^\frac{n-2}{2}
(\sigma^2-1)}{\sigma^n}<1
\end{equation}
and, we have
\begin{equation}\label{E212}
\left[u^2 +(n-1)v^2\right]^\frac{n}{2} \ge a\,  u^n + b\,  u
v^{n-1}
\end{equation}
where
\begin{equation}\label{defb}
b= \frac{n\left(\sigma^2 + n-1\right)^\frac{n-2}{2}}{\sigma}
\end{equation}
Equality holds if and only if $u = \sigma v$.
\end{lemma}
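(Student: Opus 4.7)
The lemma has two claims: the scalar bound $a(\sigma)<1$, and the pointwise inequality with its equality case. My plan is to deduce the inequality from a one-variable convexity argument that relies essentially on the first claim.

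\textbf{Step 1 (the bound $a<1$).} Logarithmic differentiation of $a(\sigma)=(\sigma^2+n-1)^{(n-2)/2}(\sigma^2-1)/\sigma^n$ and clearing the denominator $\sigma(\sigma^2-1)(\sigma^2+n-1)$ yields, after a short simplification,
\[
 a'(\sigma) \;=\; \frac{n\bigl[(n-1)-(n-3)\sigma^2\bigr]}{\sigma(\sigma^2-1)(\sigma^2+n-1)}\,a(\sigma).
\]
For $n=3$ the bracket is the positive constant $2$; for $n\ge 4$ it is positive exactly on the interval $(1,\sqrt{(n-1)/(n-3)})$ in which $\sigma_n=\kappa_n$ lives by construction. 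Combined with $a(1)=0$ and $a(\kappa_n)=1$ (from the very equation defining $\kappa_n$), strict monotonicity gives $a(\sigma)<1$ throughout $[1,\sigma_n)$.

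\textbf{Step 2 (reduction and osculation at $u=\sigma v$).} Both sides of the desired inequality are homogeneous of degree $n$ in $(u,v)$. The case $v=0$ reduces to $u^n\ge au^n$, which holds because $a<1$; for $v>0$ I set $t=u/v$ and consider
\[
 g(t)\;:=\;(t^2+n-1)^{n/2}-at^n-bt, \qquad t\ge 0.
\]
Substituting the definitions of $a$ and $b$ into $g(\sigma)$ and $g'(\sigma)$ and pulling out the common factor $(\sigma^2+n-1)^{(n-2)/2}$ leaves algebraic expressions that collapse to zero. Hence $g(\sigma)=g'(\sigma)=0$, so $t=\sigma$ is a double zero and the candidate point of equality is forced to be $u=\sigma v$.

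\textbf{Step 3 (global convexity -- the main obstacle).} What remains is to upgrade the double zero at $\sigma$ into a global minimum by showing $g''(t)>0$ on $(0,\infty)$. Differentiating,
\[
 g''(t)\;=\;n(n-1)\Bigl[(t^2+n-1)^{(n-4)/2}(t^2+1)-a\,t^{n-2}\Bigr].
\]
Dividing the bracket by $t^{n-2}$ converts it into $\Phi(1/t^2)-a$, where $\Phi(y):=(1+(n-1)y)^{(n-4)/2}(1+y)$. For $n\ge 4$ both factors of $\Phi$ are $\ge 1$; for $n=3$ one verifies $\Phi(y)^2=(1+y)^2/(1+2y)=1+y^2/(1+2y)\ge 1$. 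In either case $\Phi(y)\ge 1>a$, so $g''(t)>0$ for every $t>0$. Strict convexity of $g$ combined with $g(\sigma)=g'(\sigma)=0$ then forces $g(t)\ge 0$ on $[0,\infty)$ with equality iff $t=\sigma$, which is the assertion of the lemma.

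The main obstacle is precisely Step 3: ruling out additional critical points of $g$. I anticipate that a direct sign analysis of $g'$ would be tricky because $g'$ has mixed monomials, whereas the $g''>0$ route reduces everything to the elementary inequality $\Phi(y)\ge 1$, which splits cleanly according to whether $n\ge 4$ or $n=3$ and is transparently closed by the strict bound $a<1$ from Step 1.
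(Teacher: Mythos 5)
Your proof is correct. Note first that the paper itself offers no argument for this lemma: it is imported verbatim from \cite{memoirs} (where it is one of the pointwise ``free Lagrangian'' inequalities), so there is no in-paper proof to compare against; your write-up is a legitimate self-contained verification. I checked the three computations on which everything hinges. Logarithmic differentiation does give $a'/a=\frac{n-2}{\sigma^2+n-1}\sigma+\frac{2\sigma}{\sigma^2-1}-\frac{n}{\sigma}$, whose numerator over the common denominator collapses to $n[(n-1)-(n-3)\sigma^2]$ (the $\sigma^4$ terms cancel), so $a$ is strictly increasing on $[1,\sqrt{(n-1)/(n-3)}\,]$, and since $\kappa_n$ is by definition the point of that interval where $a=1$, indeed $a<1$ on $[1,\sigma_n)$. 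The osculation identities $g(\sigma)=g'(\sigma)=0$ follow from $a\sigma^n+b\sigma=(\sigma^2+n-1)^{(n-2)/2}(\sigma^2-1+n)$ and $an\sigma^{n-1}+b=n\sigma(\sigma^2+n-1)^{(n-2)/2}$, both immediate from the definitions. Finally $g''(t)=n(n-1)\bigl[(t^2+n-1)^{(n-4)/2}(t^2+1)-at^{n-2}\bigr]$ is as you state, and the reduction to $\Phi(1/t^2)\ge 1>a$ is clean in both regimes $n\ge 4$ and $n=3$; strict convexity plus the double zero then yields the inequality together with the equality case $u=\sigma v$ (the degenerate cases $v=0$ and $u=v=0$ being handled by $a<1$ as you note). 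The only cosmetic remark is that in Step 1 the positivity of the bracket on ``exactly'' $(1,\sqrt{(n-1)/(n-3)})$ should read ``on $(0,\sqrt{(n-1)/(n-3)})$, hence in particular on the relevant range $\sigma>1$''; this does not affect the argument.
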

\section[Proof of Theorem~\ref{newkalaj}]{Proof of Theorem \ref{newkalaj}}
$\clubsuit$ {\bf Proof of a)}.\\
Here we assume \eqref{Kalajnitsche1}. This bound means that there is a radial
$(\rho,n)$-harmonic homeomorphism
\begin{equation}\label{vienna3}
h_c : {\mathbb A} \rightarrow {\mathbb A}^{\! \ast}\, ,
\hskip1cm h_c (x) = H_c\big(|x|\big)\, \frac{x}{|x|}
\end{equation}
Recall the characterictic equation for $H=H_c(t)$ is
\begin{equation}\label{E954376}
\left[H^2 + \frac{t^2 \dot{H}^2}{n-1}  \right]^\frac{n-2}{2} \left(H^2 - t^2 \dot{H}^2\right) \equiv \frac{c}{\rho(H)}
\end{equation}
where $c=C(R,R_*)$ is a constant determined by $(R,R_*)$.
\\
{$\bullet $ \textbf{The case} $0\le c\le c^\diamond.$}\\
Let $$\eta_{_H}=\frac{t \dot{H}}{\dot{H}}.$$ Then \eqref{E954376} is equivalent with
\begin{equation}\label{E314}
\left(1 + \frac{\eta_{_H}^2}{n-1}\right)^\frac{n-2}{2} \left(1-
\eta_{_H}^2 \right) = \frac{c}{\rho(H)H^n}.
\end{equation}
Here $c$ satisfies the condition $c\le \rho(1)$ and so $$\frac{c}{\rho(H)H^n}\le 1.$$
Now, let $h : {\mathbb A}
\overset{\textnormal{\tiny{onto}}}{\longrightarrow} {\mathbb A}^{\!
\ast}$, $h \in {\mathscr
W}^{1,n}({\mathbb A}, {\mathbb A}^{\! \ast} )$, be arbitrary orientation preserving  homeomorphism of annuli mapping the inner boundary onto the inner boundary. For  $x\in
{\mathbb A}$,  let $u=
\left|h_{_N}(x)\right|$, $v= \left|h_{_T}(x)\right|$.
The equation (\ref{E314}) suggests that we should consider the nonnegative solution
$\eta=\eta(t)$  to the equation
\begin{equation}\label{equa}
\left(1+\frac{\eta^2}{n-1}\right)^\frac{n-2}{2} (1-\eta^2)=
\frac{c}{\rho(t)t^n} \, , \; \; 1 < t < R_\ast.
\end{equation}
There is exactly one such $\eta$ and it lies in the interval
$[0, 1]$ because $$\frac{c}{\rho(t)t^n}\le \frac{c}{\rho(1)}\le 1.$$
Let $\sigma=
\eta \big(t\big)$ be the solution of \eqref{equa}, where $t=|h(x)|$. Then $0\le \sigma \le 1$.  We apply Lemma \ref{le5} to obtain the point-wise
inequality
\begin{eqnarray}\label{davidi}
\rho(h)\norm Dh \norm^n &=& \rho(h)\left[\left|h_N\right|^2 +(n-1) \left|h_T\right|^2 \right]^\frac{n}{2} \nonumber \\
&\ge & \rho(h)a(\sigma)
\,\left|h_T\right|^n +\rho(h)b(\sigma)
\left|h_N\right|\, \left|h_T\right|^{n-1}
\end{eqnarray}
Now we find
\[\begin{split}
a(\sigma)&= (n-1) \left(\sigma^2 +n-1\right)^\frac{n-2}{2} \left(1-
\sigma^2\right)\\&=(n-1)^\frac{n}{2}\frac{c}{|h|^n \rho(|h|)}
\end{split}
\]
and so
\begin{eqnarray}\label{davidi1}
\rho(h)\norm Dh \norm^n &=& \rho(h)\left[\left|h_N\right|^2 +(n-1) \left|h_T\right|^2 \right]^\frac{n}{2} \nonumber \\
&\ge & (n-1)^\frac{n}{2}\,  c
\,\frac{\left|h_T\right|^n}{|h|^n} +B\big(|h|\big)
\left|h_N\right|\, \left|h_T\right|^{n-1}
\end{eqnarray}
Here  $$B \big(|h|\big)= n \rho(|h|)   \left(\eta^2(|h|) +n-1 \right)^\frac{n-2}{2}$$ comes from (\ref{eqb}). An important fact about
$B\big(|h|\big)$ is that we have  equality at \eqref{davidi1}  if
$\left|h_N \right|= \eta (|h|)\, \left|h_T \right|$. This hold true for the radial $(\rho, n)$-harmonic map at (\ref{vienna3}), by the
definition of the constant $c$. Let us integrate \eqref{davidi1}  over
the annulus ${\mathbb A}$. For the last term  we apply the
 lower bound at (\ref{E184}). To estimate the first term in
the right hand side of \eqref{davidi1}  we use H\"older's inequality and we have
$$
\left(\int_{\mathbb A} \frac{\left|h_T\right|^{n-1}}{|x|\, |h|^{n-1}}\right)^\frac{n}{n-1}  \le \int_{\mathbb A} \frac{\left|h_T\right|^n}{|h|^n} \left(\int_{\mathbb A} \frac{dx}{|x|^n}\right)^\frac{1}{n-1},$$
and then use ~\eqref{vienna12}. Thus we have
\begin{eqnarray}\label{vienna13}
\int_{\mathbb A} \rho(h)\norm Dh \norm^n &\ge & (n-1)^\frac{n}{2} c \left(\int_{\mathbb A} \frac{\left|h_T\right|^{n-1}}{|x|\, |h|^{n-1}}\right)^\frac{n}{n-1}  \left(\int_{\mathbb A} \frac{dx}{|x|^n}\right)^\frac{-1}{n-1}  \nonumber \\ &\; & +\,  \omega_{n-1} \int_{r_\ast}^{R_\ast} \tau^{n-1} B(\tau)  \, d \tau   \\
&\ge &    (n-1)^\frac{n}{2} \, c \, \mbox{Mod}\, {\mathbb A}\,
+ \, \omega_{n-1} \int_{r_\ast}^{R_\ast} \tau^{n-1} B(\tau) \, d
\tau \nonumber
\end{eqnarray}
Finally, observe that we have equalities in all estimates for the
radial stretchings. Thus
\begin{equation}\label{E273}
\int_{\mathbb A} \rho(h)\norm Dh \norm^n \ge \int_{\mathbb A} \rho(h_c) \norm
Dh_c \norm^n
\end{equation}
as stated.
\\
{$\bullet $ \bf The case $c_\diamond\le c\le 0$}. Then ${\mathbb A}^{\! \ast}$ is
  thinner than ${\mathbb A}$. Let $H=H_c$. Then
$$\mathcal{L}[H]=c\le 0.$$ Thus  $$c_1\equiv -c = (n-1)^{\frac{2-n}{2}}\rho(H)\left[t^2(\dot H)^2+(n-1)H^2\right]^{\frac{n-2}{2}}(t^2(\dot H)^2-H^2)\ge 0.$$
or $$(n-1+\eta^2) ^{\frac{n-2}{2}}(\eta^2-1)=\frac{c_1(n-1)^{\frac{n-2}{2}}}{|H|^np(H)}$$
Now we consider the  general mapping $h$. There is exactly one solution $\sigma=\eta(|h(x)|)$ of the equation
$$(n-1+\eta^2) ^{\frac{n-2}{2}}(\eta^2-1)=\frac{c_1(n-1)^{\frac{n-2}{2}}}{|h(x)|^np(|h(x)|)}.$$ Since $1\le |h(x)|\le R_*$, we conclude that
$$a(\sigma):=\frac{(n-1+\sigma^2) ^{\frac{n-2}{2}}(\sigma^2-1)}{\sigma^n}\le 1.$$
From Lemma~\ref{le7} we obtain
\begin{equation}\label{ps}\begin{split}\rho(|h|) (\|Dh\|^n)&= \rho(|h|) (|h_N|^2+(n-1)|h_T|^2)^{n/2}
\\&\ge \rho(|h|) \left(a(\sigma)|h_N|^n+ b(\sigma) |h_N|||h_T|^{n-1}\right)
\\&= c_1(n-1)^{\frac{n-2}{2}}\left[\frac{|h_N|}{|h|\eta(|h|)}\right]^n+\Phi(|h|)  |h_N|||h_T|^{n-1}\end{split}\end{equation} where
$$\Phi(|h|)=\rho(|h|) b(|\eta(|h|))=\rho(|h|) \frac{n\left(\eta^2(|h|) + n-1\right)^\frac{n-2}{2}}{\eta(|h|)}.$$
According to Lemma \ref{le7}, equality holds at a
given point $x$ if and only if $\left| h_N (x)\right|= \eta
\big(|h(x)|\big)\, \left|h_T(x)\right|$. In particular, it holds
almost everywhere for $h=h_c (x)$, because $\left|(h_c)_N
\right|=\eta_H \left|(h_c)_T \right|$. We now integrate over the
annulus ${\mathbb A}$. The last term at (\ref{ps}) is
estimated by using  (\ref{E184}),
\begin{eqnarray}\label{E300}
\int_{\mathbb A}  \Phi \big(|h|\big)  \left|h_N\right|\,
\left|h_T\right|^{n-1} &\ge &  \omega_{n-1}
\int_{r_\ast}^{R_\ast}
\tau^{n-1} \Phi(\tau)\, d\tau \nonumber \\
&=& \int_{{\mathbb A}} \Phi \big(|h_c |\big)\, \left|(h_c)_N
\right|  \left|(h_c)_T \right|^{n-1}
\end{eqnarray}
To estimate the first term in the right hand side of \eqref{ps} we make use of the
 identities
\begin{equation}
\frac{ \left|(h_c)_N\right|}{ \left|h_c \right|\, \eta
\big(|h_c |\big)} = \frac{\dot{H}}{H\, \eta_{_H}} =
\frac{1}{|x|}.
\end{equation}
Having in mind the simple inequality $\left|h\right|_N\le  \left|h_N\right|$, by using  H\"older's inequality we obtain
\begin{equation}\label{vienna1}
\left(  \int_{\mathbb A}
\frac{ \left|h\right|_N\, dx }{ \left|h\right|\, \eta
\big(|h|\big)\, |x|^{n-1}} \right)^n \le \int_{\mathbb A} \left[\frac{ \left|h_N\right|}{ \left|h\right|\,
\eta \big(|h|\big)}\right]^n \left(\int_{\mathbb A}
\frac{dx}{|x|^n}\right)^{n-1}.
\end{equation}
Further, as in the proof of \cite[Proposition~12.1]{memoirs}, we obtain
\begin{equation}
  \int_{\mathbb A} \frac{ \left|h\right|_N\, dx }{ \left|h\right|\, \eta \big(|h|\big)\, |x|^{n-1}} = \int_{\mathbb A}
  \frac{\left|h_c\right|_N\, dx}{\left|h_c\right|\, \eta \big(|h_c |\big)\, |x|^{n-1}}= \int_{\mathbb A} \frac{\dot{H}\big(|x|\big)\, dx }{H \, \eta_{_H} \, |x|^{n-1}} =  \int_{\mathbb A}
  \frac{dx}{|x|^n} \nonumber
\end{equation}
Hence
\begin{equation}\label{vienna2}
\int_{\mathbb A} \left[\frac{ \left|h_N\right|}{ \left|h\right|\,
\eta \big(|h|\big)}\right]^n \ge  \int_{\mathbb A}
\frac{dx}{|x|^n} =\mbox{Mod}\, {\mathbb A}.
\end{equation}
Thus
\begin{equation}
 \int_{\mathbb A} \rho(h) \, \norm Dh \norm^n \ge  c_1(n-1)^{\frac{n-2}{2}} \, \mbox{Mod}\, {\mathbb A} + \omega_{n-1}  \int_{r_\ast}^{R_\ast}
 \tau^{n-1} \Phi(\tau)\, d\tau,
\end{equation}
with equality attained for $h_c$, as stated. This finishes the proof of the fact that if the condition \eqref{Kalajnitsche1} is satisfied, then  we have the sharp inequality \eqref{sharpineq}. In order to prove the opposite statement, assume that $R^*>H_{c^\diamond}(R)$. Then by Theorem~\ref{newkalajrad} there is $c=c(R,R^*)<c^\diamond$ and a diffeomorphism $H=H_c:[1,R]\to [1,R^*]$, so that $h(x)=H(|x|)\frac{x}{|x|}$ is a $(\rho,n)$-harmonic diffeomorphism between $\mathbb{A}$ and $\mathbb{A}^\ast$.

This finishes the proof of Theorem~\ref{newkalaj}~a), up to the uniqueness part. The  uniqueness part follows by repetition the approach of the similar statement from \cite{memoirs}, and we will not write the details here. It is important to emphasis that in some key  places where we used the sharp inequalities, the equality statement is attained if and only if $$J_h(x)=\left|h_T\right|^{n-1}\left| h_{N} \right|$$ and so the matrix \begin{equation*}
{\bold C}(x,h) \bydef D^\ast h\cdot  Dh = \left[ \begin{array}{cccc}  \left| h_{N} \right|^2 & 0 & \cdots & 0  \\
 0 & \left| h_{T} \right|^2 &   \cdots & 0\\
& &\ddots &  \\
0 & 0 & \cdots & \left| h_{T} \right|^2
\end{array}\right]
\end{equation*}
enters to the stage, in order to prove that $h$ is radial.
\\
$\clubsuit$ {\bf Proof of b)}. Let ${\mathscr R}={\mathscr R} ({\mathbb A}\, , \, {\mathbb A}^\ast)$ be the class of orientation preserving radial $(\rho,n)$-harmonic diffeomorphisms mapping the inner boundary onto itself and let $\mathscr{D}=\mathscr{D}(R,R_\ast)$ be the class of orientation preserving $C^2$ diffeomorphisms of $[1,R]$ onto $[1,R_\ast]$.
Now, we find  the infimum in the left hand
side of \eqref{sharpineq} for $n>3$ and obtain
\begin{equation}\label{iwan1}\begin{split}
\inf_{h\in {\mathscr R}
}\int_{\mathbb A}\rho(h)\norm Dh\norm^n  &=\omega_{n-1} \inf_{H\in \mathscr{D}}\int_1^R \rho(H) \left[
\dot{H}^2 +
(n-1)t^{-2}H^2\right]^\frac{n}{2}t^{n-1}{dt}  .\end{split}
\end{equation}
Here  $$L(t, H,\dot H)\bydef \rho(H) \left[
\dot{H}^2 +
(n-1)t^{-2}H^2\right]^\frac{n}{2}t^{n-1}$$ is strictly convex in $K=\dot H$ and coercive and thus the minimum is attained for a smooth function $H_\circ$ satisfying Euler-Lagrange equation and boundary conditions $H_\circ(1)=1$ and $H_\circ(R)=R_\ast$. Then $H_\circ=H_c$.
In order to prove this fact notice that, in view of \eqref{negative} and \eqref{R} we obtain $R_\ast > R$. Thus \[\begin{split}\int_1^R \frac{|\dot H_\circ|}{H_\circ} dt&=\int_1^R\abs{d\log H_\circ(t)} dt\\&\ge \abs{\int_1^R{d\log H_\circ(t)} dt}
 \\&=\log R_\ast > \log R=\int_1^R\frac{1}{t}dt.\end{split}\] By \eqref{banane}  the expression $$\frac{|\dot H_\circ|}{H_\circ} -\frac{1}{t}$$ has a constant sign, and thus $\mathcal{L}[H_\circ]=c_1< 0$.

 So by \eqref{banane} we infer that $H'_\circ(t)>0$, and thus $H_\circ$ is an increasing diffeomirphism. But then it coincides with $H_c$, because of uniqueness of the solution under this constraint.
 We obtain that
 \begin{equation}\label{iwan}\begin{split}
\inf_{h\in {\mathscr R}
}\int_{\mathbb A}\rho(h)\norm Dh\norm^n  &=\omega_{n-1} \int_1^R \rho(H_c) \left[t^2
\dot{H_c}^2 +
(n-1){H_c}^2\right]^\frac{n}{2}\frac{dt}{t}  \\
&= \omega_{n-1}  \int_1^R \rho[H_c(t)] \left[H_c(t)\right]^n \left[\eta_{_{H_c}}^2(t) +
n-1\right]^\frac{n}{2}\frac{dt}{t}.\end{split}
\end{equation}

Let $\Phi^\lambda: S^{n-1}\to S^{n-1}$ be the so called spherical homothety constructed in \cite{memoirs}, where $\lambda> 0$ is a real parameter, so that $\Phi^1=\mathbf{Id}$. More precisely, if $(\theta, \varphi_1,\dots, \varphi_{n-2})$ are spherical coordinates of $x$, then $(\varphi(\theta), \varphi_1,\dots, \varphi_{n-2})$ are spherical coordinates of  $\Phi^\lambda(x)$, where $\varphi(\theta)=2\tan^{-1}(\lambda\tan\frac{\theta}{2})$. Then $\varphi$ is a diffeomorphism of $[0,\pi]$ onto itself.  Furthermore $\Phi^\lambda$ is a conformal self-mapping of the unit sphere. Thus if $\zeta=\mathcal{S}(\theta, \varphi_1,\dots, \varphi_{n-2})$ are spherical coordinates, and $\Phi^\lambda(\zeta)=\mathcal{S}(\varphi(\theta), \varphi_1,\dots, \varphi_{n-2})$, by using conformality of $ \Phi^\lambda$ and the formula $$\Phi^\lambda(\mathcal{S}(\theta, \varphi_1,\dots, \varphi_{n-2}))=\mathcal{S}(\varphi(\theta), \varphi_1,\dots, \varphi_{n-2}),$$
we obtain that the the ratio between Gram determinants of $$D\mathcal{S}(\varphi(\theta), \varphi_1,\dots, \varphi_{n-2})$$ and of  $$D\mathcal{S}(\theta, \varphi_1,\dots, \varphi_{n-2})$$ is equal to $\varphi'(\theta)^{n-1}$. Thus, having in mind the conformality of $\Phi^\lambda$ we define
  \begin{equation}\label{mye}|D\Phi^\lambda(\zeta)|={\varphi'(\theta)}=\frac{\sin\varphi(\theta)}{\sin \theta}=\frac{2 \lambda}{1+\lambda^2+(1-\lambda^2)\cos \theta},\end{equation} where $\theta\in[0,\pi]$ is the meridian of $\zeta$.

Notice that $\varphi$ is the only diffeomorphism that produces a conformal mapping on $\mathbf{S}^{n-1}$. Indeed it is only solution of the differential equation with respect to $\varphi$ in \eqref{mye}.

By \cite[Eq.~14.50]{memoirs} we have  \begin{equation}\label{E349}
\phi(\lambda):=\frac{1}{\omega_{n-1}}\int_{{\mathbf{S}}^{n-1}}\left[\sigma^2 + (n-1)  |{D\Phi^\lambda}|^2 \right]^\frac{n}{2} < \left[ \sigma^2 + n-1 \right]^\frac{n}{2}
\end{equation} for every parameter $1< \lambda\sqrt{\frac{n-3}{n-1}}\sigma,$ where $\sigma> \sqrt{\frac{n-1}{n-3}}$.

This mean that $\lambda=1$ is a local maximum of $\phi$. We prove here more, $\lambda=1$ is local maximum of $\phi$ if and only if $\sigma>\sqrt{\frac{n-1}{n-3}}$.

Then, by direct computation, in view of \eqref{mye}  we find that $$\phi'(1)=0$$ and $$\phi''(1)=\frac{2 \left(-1-\sigma^2 (-3+n)+n\right) \sqrt{\pi } \Gamma\left[\frac{1+n}{2}\right]}{ \Gamma\left[\frac{n}{2}\right]}.$$ So $\phi''(1)<0$ if and only if $\sigma> \sqrt{\frac{n-1}{n-3}}$.

Then we test the infimum in  the right hand side of \eqref{sharpineq}
with the  mapping
\begin{equation}
h_\lambda (x)= H\big( |x|\big)\, \Phi^\lambda
\left(\frac{x}{|x|}\right)
\end{equation}
where, as in the previous case, $\Phi^\lambda : \mathbf{S}^{n-1}
\rightarrow \mathbf{S}^{n-1}$ is the spherical homothety and $H=H_c$. An important facts concerning $\Phi^\lambda$, which follows from \eqref{mye}, is the following
$$\int_{\mathbf{S}^n} {|D\Phi^\lambda|^{n-1}}=\omega_{n-1}$$
 From the equation  $$(\eta^2(t)-1)\left(1+\frac{\eta^2(t)}{n-1}\right)^{(n-2)/2}=-v_c(H(t))=\frac{-c}{\rho[H(t)]H(t)^n}$$  in view of \eqref{eta} we infer that

$$(\eta^2(t)-1)\left(1+\frac{\eta^2(t)}{n-1}\right)^{(n-2)/2}\ge \frac{-c}{\rho(R_*)R_*^n}.$$ From \eqref{cr} we obtain $$(\eta^2(t)-1)\left(1+\frac{\eta^2(t)}{n-1}\right)^{(n-2)/2}> \left(\left(\sqrt{\frac{n-1}{n-3}}\right)^2-1\right)\left(1+\frac{\left(\sqrt{\frac{n-1}{n-3}}\right)^2}{n-1}\right)^{(n-2)/2},$$ and thus \begin{equation}\label{newk}\eta=\eta_H(t)> \sqrt{\frac{n-1}{n-3}}.\end{equation}
From \eqref{iwan} and \eqref{E349} we find
that
\begin{eqnarray}
\inf_{h\in {\mathscr P} ({\mathbb A}\, , \, {\mathbb A}^\ast)
}\int_{\mathbb A}\rho(h)\norm Dh\norm^n &\le &  \int_{\mathbb A}\rho(h_\lambda)\norm
Dh_\lambda \norm^n
\nonumber \\
&=&   \int_r^R \rho[H(t)]\left[H(t)\right]^n\int_{\mathbf{S}^n}
\left[\eta_{_H}^2(t) + (n-1)| D\Phi^\lambda |^2
\right]^\frac{n}{2}\frac{dt}{t} \nonumber \\
&<& \omega_{n-1} \int_r^R \rho[H(t)]\left[H(t)\right]^n \left[\eta_{_H}^2(t) +
n-1\right]^\frac{n}{2}\frac{dt}{t} \nonumber  \\
&=& \inf_{h\in {\mathcal R} ({\mathbb A}\, , \, {\mathbb A}^\ast)
}\int_{\mathbb A}\rho(h)\norm Dh\norm^n \nonumber.
\end{eqnarray}

Here we have chosen
$\lambda > 1$ sufficiently close to 1.

\end{document}